\documentclass[11pt,reqno]{amsart}

\usepackage{amsmath,amssymb,amsthm,mathtools}
\usepackage[margin=1in]{geometry}
\usepackage{microtype}
\usepackage{enumerate}
\usepackage[hidelinks]{hyperref}
\usepackage{float}
\usepackage{upgreek}
\usepackage{listings}
\usepackage{dsfont}

\newtheorem{innercustomgeneric}{\customgenericname}
\providecommand{\customgenericname}{}
\newcommand{\newcustomtheorem}[2]{%
  \newenvironment{#1}[1]
  {%
   \renewcommand\customgenericname{#2}%
   \renewcommand\theinnercustomgeneric{##1}%
   \innercustomgeneric
  }
  {\endinnercustomgeneric}
}

\newcustomtheorem{thm1}{Theorem}
\newcustomtheorem{lemma1}{Lemma}

\newtheorem{theorem}{Theorem}[section]
\newtheorem{thm}[theorem]{Theorem}
\newtheorem{lemma}[theorem]{Lemma}

\DeclareMathOperator{\stab}{Stab}
\DeclareMathOperator{\fl}{fl}
\DeclareMathOperator{\Sp}{Sp}

\DeclareMathOperator{\Aut}{Aut}
\DeclareMathOperator{\Gal}{Gal}
\DeclareMathOperator{\GF}{GF}
\DeclareMathOperator{\Irr}{Irr}

\title{Orbits of $p$-Nilpotent Group Actions for small primes}

\author{Chloe So}
\address{BASIS Independent Silicon Valley, 1290 Parkmoor Avenue, San Jose, CA 95126}
\email{sochloekaylee@gmail.com}

\author{Yong Yang}
\address{Department of Mathematics, Texas State University, 601 University Drive, San Marcos, TX 78666, USA}
\email{yang@txstate.edu}

\date{\today}
\keywords{solvable linear group, $p$-nilpotent group, defect groups, representation of solvable group, actions of solvable groups}

\begin{document}

\begin{abstract}
Let $G$ be a finite solvable group and $V$ be a faithful completely reducible $G$-module. We improve previous bounds for the sizes of defect groups of blocks of solvable groups through an improved bound on vector stabilizers of $p$-nilpotent groups. Specifically, if $G$ is 2-nilpotent and $2 \nmid |V|$, then there exists $v \in V$ such that $\mathbf{C}_G(v) \leq \mathbf{F}_4(G)$. Similarly, if $G$ is 3-nilpotent and $3 \nmid |V|$, then there exists $v \in V$ such that $\mathbf{C}_G(v) \leq \mathbf{F}_5(G)$. Using these results, we conclude that $d(B) \leq \lfloor \frac{9n}{10} \rfloor$ when $O_p(G) = 1$ for $p = 2$ and $d(B) \leq \lfloor \frac{19n}{21} \rfloor$ when $O_p(G) = 1$ for $p = 3$.
\end{abstract}

\maketitle

\section{Introduction}
Large orbit theorems are an important tool to study the structure of finite solvable groups through linear group actions. Consider a faithful completely reducible $G$-module $V$ for finite solvable group $G$. In general, elements in $V$ with large orbits have small stabilizers. For example, when $V$ has a regular orbit, there exists $v \in V$ such that $\mathbf{C}_G(v)=1$ by the Orbit-Stabilizer Theorem.

We can bound these stabilizers with the Fitting series of $G$. In \cite{Moreto}, Moreto and Wolf show that there exists $v \in V$ such that $\mathbf{C}_G(v) \leq \mathbf{F}_9(G)$. Yang further improves the bound to $\mathbf{F}_7(G)$ in \cite{Yang}. This result has important applications regarding character degrees and conjugacy classes.

Specifically, we focus on the defect groups of $p$-blocks in finite solvable groups. In \cite{Addendum}, Xiao et al. improved the bound on $d(B)$ when $O_p(G) = 1$ for $p = 2, 3$ by showing that if a $p$-group $A$ acts faithfully and coprimely on a solvable group $G$, then $|A : A'|$ is bounded by $b^8$ where $b$ is the largest orbit length of $A$ on $\Irr(G)$. This argument uses bounds for stabilizers of solvable group actions.

Motivated by the 2-nilpotent or 3-nilpotent condition of $A$ for $p = 2, 3$, respectively, it is natural to consider the stabilizer bound of solvable group actions for $p$-nilpotent groups. In this paper, we find that if $V$ is a finite completely reducible faithful $G$-module where $2 \nmid |V|$ for a finite solvable 2-nilpotent group $G$, then there exists $v \in V$ such that $\mathbf{C}_G(v) \leq \mathbf{F}_4(G)$. Similarly, if $V$ is a finite completely reducible faithful $G$-module where $3 \nmid |V|$ for a finite solvable 3-nilpotent group $G$, then there exists $v \in V$ such that $\mathbf{C}_G(v) \leq \mathbf{F}_5(G)$. Finally, these results are used to improve the bound of $d(B)$ from $\lfloor \frac{15n}{16} \rfloor$ to $\lfloor \frac{9n}{10} \rfloor$ when $p=2$ and from $\lfloor \frac{13n}{14} \rfloor$ to $\lfloor \frac{19n}{21} \rfloor$ when $p=3$.
\section{Preliminaries}

We begin by fixing our notation.

\begin{itemize}
    \item $G$ is $p$-nilpotent if there exists normal subgroup $K \triangleleft G$ such that $p \nmid |K|$ and $|G/K|=p^n$ for some non-negative integer $n$.
    \item Let $\mathbf{F}(G)$ be the Fitting subgroup of $G$.
    \item Let $\mathbf{F}(G) = F_1(G) \leq F_2(G) \leq \cdots \leq F_n(G) = G$ be the ascending Fitting series, where
    \[
    F_{i+1}(G)/F_i(G)=\mathbf{F}(G/F_i(G)).
    \]
    The Fitting length of $G$ is $\fl(G)=n$. We call $F_1(G)$ the bottom term of the Fitting series and $G/F_{n-1}(G)$ the top Fitting factor of $G$.
    \item If $G$ is a linear group on $V$, let $r(G)$ be the number of orbits of $G$ on $V$.
    \item Let $V$ be a Galois field $\GF(q^n)$ for a prime power $q$. Define a semilinear group to be
    \begin{equation}
        \Gamma(q^n) = \{ x \mapsto ax^{\sigma} \mid a \in \GF(q^n)^\#, \sigma \in \Gal(\GF(q^n)/\GF(q)) \}.
    \end{equation}
    \item If $H$ is a linear group and $S$ is a permutation group, $H \wr S$ denotes the wreath product of $H$ with $S$.
\end{itemize}

We now introduce a few lemmas.

\begin{lemma}
    If $S$ is a solvable primitive group on $\Omega$, $S$ has a unique minimal normal subgroup $M$ with $|M| = |\Omega| = q^n$ for prime $q$, $S = MS_\alpha$ where $\alpha \in \Omega$, and $M$ is a faithful irreducible $S_\alpha$-module of order $q^n$. In particular, $\fl(S) = 1 + \fl(S_\alpha)$.
\end{lemma}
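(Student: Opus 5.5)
I will prove this by the classical argument for solvable primitive permutation groups. First, take a minimal normal subgroup $M$ of $S$. Since $S$ is solvable, $M$ is an elementary abelian $q$-group for some prime $q$. Because $S$ is primitive, every nontrivial normal subgroup is transitive. Otherwise its orbits would form a nontrivial $S$-invariant block system, since the orbits of a normal subgroup are permuted by $S$. So $M$ is transitive.

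Next I show $M$ is regular. As $M$ is abelian and transitive, the stabilizer $M_\alpha$ fixes every point: for $m \in M$, $M_{\alpha m} = M_\alpha^m = M_\alpha$. Since $S$ acts faithfully on $\Omega$, this forces $M_\alpha=1$. Hence $|M|=|\Omega|=q^n$.

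Then $S = MS_\alpha$ by the Frattini-type argument: $M$ is transitive, so $S=MS_\alpha$. Also $M\cap S_\alpha=M_\alpha=1$, so $S_\alpha$ is a complement to $M$.

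Now consider $S_\alpha$ acting on $M$ by conjugation.
\begin{itemize}
\item \emph{Faithful.} Let $C=\mathbf{C}_{S_\alpha}(M)$. Then $C$ is normalized by $S_\alpha$ and centralized by $M$, so $C \trianglelefteq S$. If $C\neq 1$, then $C$ is transitive, which contradicts $C\le S_\alpha$ fixing $\alpha$. So $C=1$.
\item \emph{Irreducible.} An $S_\alpha$-invariant subgroup $1<N<M$ is also normalized by $M$, since $M$ is abelian. So $N \trianglelefteq S$, contradicting the minimality of $M$.
\end{itemize}

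For uniqueness, let $N$ be another minimal normal subgroup. Then $N\cap M=1$, so $[N,M]=1$ and $N\le \mathbf{C}_S(M)$. I claim $\mathbf{C}_S(M)=M$. Indeed, $\mathbf{C}_S(M)=M(\mathbf{C}_S(M)\cap S_\alpha)=M\,\mathbf{C}_{S_\alpha}(M)=M$ by Dedekind's law. Hence $N\le M$ and $N=M$.

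Finally, for the Fitting length, I first show $\mathbf{F}(S)$ is a $q$-group. We have $O_{q'}(S)\cap M=1$, so $O_{q'}(S)$ centralizes $M$ and lies in $M$, giving $O_{q'}(S)=1$. Put $P=O_q(S)=\mathbf{F}(S)$. Then $P=M(P\cap S_\alpha)$, and $P\cap S_\alpha$ is a normal $q$-subgroup of $S_\alpha$.

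Next I use $O_q(S_\alpha)=1$. This holds because a normal $q$-subgroup of $S_\alpha$ acting faithfully and irreducibly on the $q$-group $M$ must be trivial: its fixed points on $M$ form a nonzero $S_\alpha$-invariant subspace, hence all of $M$, and then faithfulness gives triviality. So $P\cap S_\alpha=1$, and therefore $\mathbf{F}(S)=M$.

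Then $S/\mathbf{F}(S)\cong S_\alpha$, which gives $\fl(S)=1+\fl(S_\alpha)$. The main subtle point is this identification $\mathbf{F}(S)=M$; it rests on $O_q(S_\alpha)=1$, which is the only place the irreducibility is used in that step.
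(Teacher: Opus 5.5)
Your proof is correct and follows the same route as the paper's: nontrivial normal subgroups are transitive, the abelian minimal normal subgroup $M$ is regular, $S=MS_\alpha$, the conjugation action is faithful and irreducible, and $\mathbf{F}(S)=M$. You also supply details the paper only asserts, namely the uniqueness of $M$ via $\mathbf{C}_S(M)=M$, and the identification $\mathbf{F}(S)=M$ via $O_{q'}(S)=1$ and $O_q(S_\alpha)=1$.
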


\begin{proof}
    Since $S$ is solvable and primitive, every nontrivial normal subgroup of $S$ is transitive. A minimal normal subgroup $M$ is therefore elementary abelian and regular, so $|M|=|\Omega|=q^n$ for some prime $q$. Writing $S=MS_\alpha$, the conjugation action of $S_\alpha$ on $M$ is faithful and irreducible. Since $M=\mathbf F(S)$, the equality $\fl(S)=1+\fl(S_\alpha)$ follows.
\end{proof}

\begin{lemma}
    If $H$ is a finite solvable group and $S$ is a finite solvable permutation group, then
    \[
        \fl(H \wr S) \leq \fl(H)+\fl(S).
    \]
\end{lemma}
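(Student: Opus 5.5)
The plan is to use the base group $B = H^{\Omega}$, the direct product of $|\Omega|$ copies of $H$, which is normal in $H \wr S$ with quotient $(H\wr S)/B \cong S$. Fitting length is subadditive over a normal subgroup and its quotient, and the bound follows from that together with $\fl(B) = \fl(H)$.

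\emph{Subadditivity.} I will prove that for any solvable group $G$ with $N \trianglelefteq G$,
\[
\fl(G) \leq \fl(N) + \fl(G/N).
\]
Put $a = \fl(G/N)$ and $b = \fl(N)$. The Fitting series of $G/N$ lifts to a normal series
\[
N = G_0 \leq G_1 \leq \cdots \leq G_a = G,
\]
each $G_i$ normal in $G$ and each $G_{i+1}/G_i$ nilpotent. The Fitting series of $N$ consists of characteristic subgroups $F_j(N)$ of $N$, so each is normal in $G$ and the factors $F_{j+1}(N)/F_j(N)$ are nilpotent. Concatenating gives a normal series of $G$ of length $a+b$ with nilpotent factors.

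The standard fact is that if $1 = L_0 \leq L_1 \leq \cdots \leq L_m = G$ is a normal series with nilpotent factors, then $L_i \leq F_i(G)$ for all $i$. This is an induction on $i$. Suppose $L_i \leq F_i(G)$. Then $L_{i+1}F_i(G)/F_i(G)$ is a quotient of $L_{i+1}/L_i$, so it is nilpotent. It is also normal in $G/F_i(G)$, so it lies in $\mathbf{F}(G/F_i(G)) = F_{i+1}(G)/F_i(G)$. Applying this to the concatenated series gives $G = L_{a+b} \leq F_{a+b}(G)$, hence $\fl(G) \leq a+b$.

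\emph{Base group.} The Fitting subgroup respects direct products: $\mathbf{F}(X \times Y) = \mathbf{F}(X) \times \mathbf{F}(Y)$. The inclusion $\supseteq$ holds because a product of normal nilpotent subgroups is normal and nilpotent. The inclusion $\subseteq$ holds because the projections of $\mathbf{F}(X\times Y)$ onto $X$ and $Y$ are normal nilpotent subgroups. By induction on $i$, $F_i(H^{\Omega}) = F_i(H)^{\Omega}$, so $\fl(B) = \fl(H)$.

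\emph{Conclusion.} With $N = B$ and $G = H\wr S$, subadditivity gives
\[
\fl(H\wr S) \leq \fl(H) + \fl(S).
\]
The only step needing care is the comparison of a nilpotent normal series with the Fitting series, and that is routine. The result also requires $H\wr S$ to be solvable, which holds since both $B$ and $S$ are solvable.
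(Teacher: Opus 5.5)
Your proposal is correct and takes the same route as the paper: the base group $B\cong H^{\Omega}$ is normal in $H\wr S$ with quotient $S$, $\fl(B)=\fl(H)$, and the extension inequality $\fl(G)\leq\fl(N)+\fl(G/N)$ gives the bound. The paper only cites these two facts as standard, while you also prove them correctly, via the comparison $L_i\leq F_i(G)$ for a nilpotent-factor normal series and the identity $\mathbf{F}(X\times Y)=\mathbf{F}(X)\times\mathbf{F}(Y)$.
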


\begin{proof}
    Let $B$ be the base group of $H\wr S$. Then $B$ is a direct product of copies of $H$, so $\fl(B)=\fl(H)$, while $(H\wr S)/B\cong S$. The standard extension inequality for Fitting length gives
    \[
        \fl(H\wr S)\leq \fl(B)+\fl(S)=\fl(H)+\fl(S).
    \]
\end{proof}

\begin{lemma}
    Suppose $V$ is a faithful quasi-primitive $G$-module where $|V|=q^n$ for a solvable group $G$. Note that $q$ is the order of the base field and $n=\dim(V)$. Then, $G$ has normal subgroups $U \leq T \leq F \leq A \leq G$ such that

    \begin{enumerate}[ {(}1{)} ]
        \item $F = \mathbf{F}(G)$ is a central product $F=ET$ with $E, T \triangleleft G$. $E$ is a direct product of extraspecial groups with $|E/Z| = e^2$ where $Z = E \cap T = \mathbf{Z}(E) = Soc(\mathbf{Z}(F))$.
        \item $F/T$ is a direct sum of irreducible $G/F$-modules $W_i$.
        \item $|W_i| = e_i^2$ for a prime power $e_i$. Also, $e=e_1 \cdots e_s$ divides $n$ and $\gcd(q,e) = 1$.
        \item $A = \mathbf{C}_G(Z)$ and $A/F$ acts faithfully on $F/T$.
        \item $A/\mathbf{C}_A(E_i/Z) \leq \Sp(2n_i, p_i)$ where $e_i = p_i^{n_i}$.
        \item $G/A$ is abelian since $G/A \leq \Aut(Z)$ and even $G=A$ when $e=n$.
        \item $|T:U| \leq 2$, where $U$ is cyclic and acts fixed point freely on $W$ where $W$ is an irreducible submodule of $V_U$.
        \item If $e=1$, then $G \leq \Gamma(q^n)$ and $\fl(G) \leq 2$.
    \end{enumerate}
\end{lemma}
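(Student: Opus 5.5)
This lemma is the standard structure theorem for solvable quasi-primitive linear groups (Manz--Wolf, Corollary 1.10). The plan is to rebuild it from Clifford theory together with the classification of $p$-groups all of whose characteristic abelian subgroups are cyclic.

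\textbf{Step 1: abelian normal subgroups are cyclic.} Since $V$ is faithful and quasi-primitive, $V_N$ is homogeneous for every $N \trianglelefteq G$. For abelian $N$, homogeneity together with faithfulness forces $N$ to act on a single irreducible submodule $W$ as a subgroup of $\GF(q^k)^\#$. Hence every abelian normal subgroup of $G$ is cyclic and acts fixed point freely, and $\gcd(|N|,q)=1$.

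\textbf{Step 2: structure of $F$ (items (1) and (7)).} Write $F=\mathbf F(G)=\prod_p P_p$. Each Sylow $P_p$ is normal in $G$, so all its characteristic abelian subgroups are cyclic. By P.~Hall's theorem, $P_p$ is a central product of an extraspecial group (possibly trivial) with a group that is cyclic, or, when $p=2$, dihedral, semidihedral or generalized quaternion of order at least $16$.
\begin{itemize}
\item Let $T$ be the product of the non-extraspecial parts, chosen so that $T \trianglelefteq G$.
\item Let $E$ be the product of the extraspecial parts, so that $Z = E\cap T = \mathbf Z(E)$ has prime exponent and equals $\mathrm{Soc}(\mathbf Z(F))$.
\item Let $U$ be the largest cyclic characteristic subgroup of $T$. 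Then $|T:U|\le 2$, because the $2$-part of $T$ is cyclic or of maximal class and its cyclic maximal subgroup is characteristic. By Step 1, $U$ acts fixed point freely.
\end{itemize}
This choice of $T$ with $|T:U|\le 2$ is where I expect the most bookkeeping. Some care is needed to make $E$ and $T$ genuinely $G$-invariant, which means replacing $E$ by a canonical choice such as the subgroup generated by the elements of order $p$ (or $4$) in $\mathbf C_F(\ldots)$.

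\textbf{Step 3: the modules $W_i$ and $A$ (items (2)--(6)).} Then $F/T \cong E/Z = \prod E_i/Z$ carries nondegenerate symplectic forms given by commutation. So $|E_i/Z|=e_i^2$, and the conjugation action lands in $\Sp(2n_i,p_i)$ on $\mathbf C_G(Z)$, since elements centralizing $Z$ preserve the form. This gives (5).

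Set $A=\mathbf C_G(Z)$. Then $G/A$ embeds in $\Aut(Z)$, which is abelian because $Z$ is cyclic; this gives the first part of (6). Since $Z$ is central in $A$, the standard Fitting argument ($\mathbf C_A(F/T)\cap \ldots \le F$ via $\mathbf C_G(F)\le F$ and coprime action on the $p$-groups $E_i$) shows that $A/F$ acts faithfully on $F/T$, which is (4). Complete reducibility of $F/T$ as a $G/F$-module, item (2), follows because $\Phi(G)$-type arguments show that $F/T$ is a semisimple module: $\mathbf O_{p}(G/\mathbf C_G(F/T))=1$.

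For (3), an irreducible $E$-module that is faithful on $Z$ has dimension $e$, and $V_E$ is a sum of such modules, so $e\mid n$. The coprimality $\gcd(q,e)=1$ comes from Step 1 applied to $Z$. If $e=n$, then $E$ acts absolutely irreducibly over a field containing the character values, and the $Z$-eigenspace structure forces $G$ to centralize $Z$, so $G=A$. This completes (6).

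\textbf{Step 4: the case $e=1$ (item (8)).} If $e=1$, then $F=T$ and $A/F$ acts faithfully on the trivial module, so $A=F$ and $G/F$ is abelian. Moreover $U$ is cyclic of index at most $2$. By Schur's lemma, $\mathrm{End}_U(W)$ is a field $\GF(q^n)$, and $\mathbf C_G(U)$ acts $\GF(q^n)$-linearly. This yields $G\le\Gamma(q^n)$, hence $\fl(G)\le 2$.
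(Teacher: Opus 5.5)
The paper does not prove this lemma; it quotes Manz--Wolf, Corollaries 1.10 and 2.6. Your outline tries to rebuild those results, and it has genuine gaps exactly where that citation does the work.

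\textbf{The $G$-normality of $E$ and $T$ in (1).} You treat this as bookkeeping, but for $p=2$ it is a missing idea, and your suggested remedy cannot work.

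For odd $p$ there is no problem. Hall's extraspecial factor has exponent $p$, so $E_p=\Omega_1(F_p)$ is characteristic.

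Now let $p=2$ and $F_2=E_2\circ C_4$ with $E_2\cong 2^{1+2n}$.
\begin{itemize}
\item The elements of order $2$, and also those of order $4$, already generate all of $F_2$.
\item The extraspecial $E'$ with $F_2=E'\circ C_4$ correspond bijectively to the $2^{2n}$ quadratic forms on $F_2/\mathbf Z(F_2)$ polarizing to the commutator form.
\item The automorphisms of $F_2$ centralizing $\mathbf Z(F_2)$ induce all of $\Sp(2n,2)$, which is transitive on the forms of each type.
\end{itemize}
Hence for $n\ge 2$ no subgroup characteristic in $F$ can serve as $E_2$. You would have to prove that $G$ itself fixes such a form, and nothing in your outline does. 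The maximal-class case has the same problem, since there $T_2=\mathbf C_{F_2}(E_2)$ depends on the choice of $E_2$.

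\textbf{Items (2)--(3).} Your recipe for $T$ already fails for the quasi-primitive group $G=Q_8\le GL(2,3)$. Hall's decomposition gives $E=Q_8$, so your $T$ is $Z$. Then $F/T$ is a sum of two trivial $G/F$-modules of order $2$, neither of order $e_i^2$. The admissible choice there is $T=Q_8$, $U\cong C_4$, $e=1$; indeed $Q_8\le\Gamma(9)$. So for $p=2$ one must sometimes absorb a normal cyclic subgroup of order $4$ of $E_2$ into $T$. Item (7) permits this, but your construction never does it.

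Your justification of semisimplicity is also a non sequitur. The condition $\mathbf O_p(H)=1$ does not make a faithful $\mathbb F_pH$-module semisimple. For example, let $S_3$ act over $\mathbb F_2$ on its natural module plus a nonsplit self-extension of the trivial module. The $\Phi(G)$ argument gives complete reducibility of $F/\Phi(G)$, not of $F/T$.

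For odd $p$ the working argument is a perp argument.
\begin{itemize}
\item If $X/Z\le E_p/Z$ is $G$-invariant, so is $X^\perp/Z$, because $G$ preserves the commutator form up to an automorphism of $Z$.
\item The preimage of $(X\cap X^\perp)/Z$ is abelian and normal, hence cyclic by your Step 1.
\item It therefore equals $Z$, because $E_p$ has exponent $p$.
\end{itemize}
Thus $E_p/Z$ is an orthogonal sum of nondegenerate irreducible submodules, each of square order.

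\textbf{Item (8).} Step 4 assumes $\mathrm{End}_U(W)=\GF(q^n)$, that is, that $U$ is irreducible on $V$. That is exactly the content of Corollary 2.6. To prove it, first show $\mathbf C_G(U)=U$, using $\mathbf C_G(F)\le F$ and, when $|T:U|=2$, a stability-group argument. Then $G/U$ embeds in $\Gal(\mathbb K/\GF(q))$ with $\mathbb K=\mathrm{End}_U(W)$. Since cyclic algebras over finite fields split, irreducibility of $V$ forces $\dim_{\mathbb K}V=1$.
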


\begin{proof}
    This follows from Corollaries 1.10 and 2.6 of \cite{Manz/Wolf}.
\end{proof}

\begin{lemma}
    If $N$ is normal in $G$, then $\mathbf{F}_i(N) \leq \mathbf{F}_i(G)$ for all $i$.
\end{lemma}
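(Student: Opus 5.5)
We argue by induction on $i$. The key observation is that for $N \trianglelefteq G$ one has $\mathbf{F}_i(N) = N \cap \mathbf{F}_i(G)$, and in particular $\mathbf{F}_i(N) \leq \mathbf{F}_i(G)$. We will prove the containment $\mathbf{F}_i(N) \leq \mathbf{F}_i(G)$ directly, carrying along the fact that $\mathbf{F}_i(N)$ is characteristic in $N$ and hence normal in $G$.

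For the base case $i=1$, the Fitting subgroup $\mathbf{F}(N)$ is a nilpotent characteristic subgroup of $N$. Since $N \trianglelefteq G$, it is a normal nilpotent subgroup of $G$. Hence $\mathbf{F}(N) \leq \mathbf{F}(G)$, because $\mathbf{F}(G)$ contains every normal nilpotent subgroup of $G$. (The case $i=0$, with $\mathbf{F}_0 = 1$, is trivial.)

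For the inductive step, assume $\mathbf{F}_i(N) \leq \mathbf{F}_i(G)$. We know $\mathbf{F}_{i+1}(N)/\mathbf{F}_i(N)$ is nilpotent. Consider the image of $\mathbf{F}_{i+1}(N)$ in $\overline{G} = G/\mathbf{F}_i(G)$:
\[
\mathbf{F}_{i+1}(N)\mathbf{F}_i(G)/\mathbf{F}_i(G) \cong \mathbf{F}_{i+1}(N)/(\mathbf{F}_{i+1}(N) \cap \mathbf{F}_i(G)).
\]
By the inductive hypothesis, $\mathbf{F}_{i+1}(N) \cap \mathbf{F}_i(G) \geq \mathbf{F}_i(N)$. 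So this image is a quotient of the nilpotent group $\mathbf{F}_{i+1}(N)/\mathbf{F}_i(N)$, and is therefore nilpotent. It is also normal in $\overline{G}$, since $\mathbf{F}_{i+1}(N)$ is characteristic in $N \trianglelefteq G$ and so normal in $G$. Hence the image lies in $\mathbf{F}(\overline{G}) = \mathbf{F}_{i+1}(G)/\mathbf{F}_i(G)$, which gives $\mathbf{F}_{i+1}(N) \leq \mathbf{F}_{i+1}(G)$.

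The only point needing care is the normality of each $\mathbf{F}_j(N)$ in $G$. It holds because the Fitting series terms are characteristic, being defined canonically and inductively, and a characteristic subgroup of a normal subgroup is normal. There is no real obstacle beyond this bookkeeping.
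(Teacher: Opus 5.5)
Your proof is correct and uses the same induction as the paper: the base case is $\mathbf{F}(N)\le\mathbf{F}(G)$, and the inductive step places the nilpotent normal image of $\mathbf{F}_{i+1}(N)$ inside $\mathbf{F}(G/\mathbf{F}_i(G))$. The only difference is cosmetic: the paper passes through the Fitting subgroup of the intermediate quotient $N\mathbf{F}_i(G)/\mathbf{F}_i(G)$, whereas you work directly in $G/\mathbf{F}_i(G)$ and spell out the nilpotency and normality details that the paper leaves implicit.
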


\begin{proof}
    The assertion for $i=1$ follows from $\mathbf F(N)\leq \mathbf F(G)$ when $N\triangleleft G$. Assume $\mathbf F_i(N)\leq\mathbf F_i(G)$. The image of $\mathbf F_{i+1}(N)$ in $N\mathbf F_i(G)/\mathbf F_i(G)$ is contained in the Fitting subgroup of that quotient, which in turn is contained in $\mathbf F(G/\mathbf F_i(G))$. Hence $\mathbf F_{i+1}(N)\leq\mathbf F_{i+1}(G)$, and induction completes the proof.
\end{proof}

\section{2-Nilpotent Group Actions}

\begin{lemma}
    Suppose that $G$ is a solvable 2-nilpotent irreducible subgroup of GL$(n,q)$ for a prime power $q$, where $2 \nmid q$. If $n=2$ or $(n,q)=(4,3)$, then $\fl(G) \leq 2$.
\end{lemma}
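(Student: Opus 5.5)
The plan is to use the 2-nilpotent structure directly. Write $G = KP$ with $K \triangleleft G$ of odd order and $P$ a Sylow $2$-subgroup, so that $G/K$ is a $2$-group and hence nilpotent. By the extension inequality for Fitting length (the one used in the proof of Lemma 2.2), $\fl(G) \leq \fl(K) + 1$. It therefore suffices to show that $K$ is abelian (nilpotent would be enough). Since $K \triangleleft G$ and $V$ is an irreducible $G$-module, Clifford's theorem shows that $V_K$ is completely reducible. Its homogeneous components are permuted transitively by $G$ and so all have the same dimension. Moreover, $K$ embeds into the direct product of its images on the irreducible constituents of $V_K$. So it is enough to show that every odd-order irreducible linear group arising as such an image is abelian.

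First, consider an odd-order group $K$ acting faithfully and irreducibly on a space of dimension $m \leq 4$ with $m \in \{1,2,4\}$. It is primitive, because an imprimitivity decomposition into $k \in \{2,4\}$ blocks would give a block stabilizer of index $k$ in $K$, which is impossible for odd $|K|$. Hence $K$ is quasi-primitive, and Lemma 2.3 applies. The number $e$ is a product of the $e_i$, where $e_i^2 = |E_i/Z|$ and the $E_i$ are extraspecial subgroups of $K$. Since $|K|$ is odd, $e$ is odd, and since $e \mid m \in \{1,2,4\}$, we get $e = 1$. Then $K \leq \Gamma(q^m)$, and $K \cap \GF(q^m)^\#$ has index dividing $m$ in $K$. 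For $m \in \{1,2,4\}$ this index is odd and a power of $2$, so it is $1$. Thus $K$ is contained in the cyclic group $\GF(q^m)^\#$.

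Case $n = 2$. The constituents of $V_K$ have dimension $1$ or $2$, and the preceding paragraph shows that the image of $K$ on each is cyclic. Hence $K$ is abelian and $\fl(G) \leq 2$.

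Case $(n,q) = (4,3)$. Since all homogeneous components have equal dimension, the constituent dimensions are all $1$, all $2$, or a single $4$. In particular, a mixed $3+1$ splitting cannot occur; this is the point I expect to need the most care. In every case the preceding argument makes the image of $K$ on each constituent cyclic, so $K$ is abelian. In fact, over $\GF(3)$, an odd-order irreducible subgroup of $\mathrm{GL}(1,3)$ or $\mathrm{GL}(2,3)$ is trivial: the odd part of $|\mathrm{GL}(2,3)| = 48$ is $3$, and a $3$-group cannot act irreducibly and nontrivially in characteristic $3$. Therefore $\fl(K) \leq 1$ and $\fl(G) \leq 2$.

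The main obstacle is making the Clifford and Lemma 2.3 bookkeeping airtight. One must check that the equal-dimension argument rules out every uneven splitting. One must also verify the hypotheses of Lemma 2.3 for the constituent image of $K$ rather than for $G$ itself: faithfulness holds after passing to the image, and quasi-primitivity follows from the odd-order primitivity argument above.
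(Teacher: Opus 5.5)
Your argument is correct, and it takes a genuinely different route from the paper.

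\textbf{The paper's route.} For $n=2$ the paper invokes the Manz--Wolf classification of solvable irreducible subgroups of $\mathrm{GL}(2,q)$ (Theorem 2.11). That gives three cases: $G\leq Z_{q-1}\wr Z_2$, $G\leq\Gamma(q^2)$, or the $Q_8$-type case with $G/\mathbf{F}(G)\cong Z_3$ or $S_3$, which 2-nilpotence excludes. The case $(n,q)=(4,3)$ is settled by a GAP enumeration.

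\textbf{Your route.} You show directly that the normal Hall $2'$-subgroup $K$ is abelian. By Clifford, $K$ embeds in the product of its images on the constituents of $V_K$. An odd-order irreducible linear group of $2$-power dimension must be primitive, since any block stabilizer would have even index. It is therefore quasi-primitive, with $e$ odd and dividing a power of $2$, so $e=1$. Lemma 2.3(8) then places it inside $\Gamma(q^m)$, where its index over $\GF(q^m)^\#$ is odd and divides $m$, hence is $1$.

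\textbf{What your route buys.}
\begin{itemize}
\item It is computer-free.
\item It never uses $2\nmid q$.
\item It gives a sharper structural conclusion: $G$ has an abelian normal subgroup whose quotient is a $2$-group.
\item It works verbatim for every $n$ that is a power of $2$, so it would also replace the $(4,7)$ GAP check in Lemma 3.2.
\end{itemize}
The paper's route is shorter for $n=2$ only because it outsources the work to the classification theorem.

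\textbf{A small bookkeeping point.} The equal dimension of the irreducible constituents of $V_K$ comes from Clifford's theorem making all constituents $G$-conjugate. Equal dimension of the homogeneous components alone does not quite give it. Either way, every constituent has dimension in $\{1,2,4\}$, which is all you need.
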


\begin{proof}
    Let $V$ an irreducible $G$-module with $|V|=q^n$. We use a pair $(n,q)$ to represent the different cases. 
    
    If $n=2$, by Theorem 2.11 of \cite{Manz/Wolf}, one of the following cases must occur:
    \begin{enumerate}[ {(}a{)} ]
        \item $G \leq Z_{q-1} \wr Z_2$;
        \item $G \leq \Gamma(q^2)$; or
        \item $\mathbf{F}(G) = QT$ where $Q_8 \cong Q \trianglelefteq G, T = \mathbf{Z}(\mathbf{F}(G)) = \mathbf{Z}(G)$ is cyclic, $T \leq \mathbf{Z}(GL(2,q)), T \cap Q = \mathbf{Z}(Q)$ and $G/\mathbf{F}(G) \cong Z_3$ or $S_3$. Also $q \neq 2$.
    \end{enumerate}

    We now analyze each case. In (a), $\fl(G) \leq 1+1 = 2$. In (b), $\fl(G) \leq 2$. In (c), $G$ cannot be 2-nilpotent. Therefore, $\fl(G) \leq 2$.

    If $(n,q)=(4,3)$, we can use GAP to manually check the Fitting length of every 2-nilpotent subgroup (see Appendix). Through this, we find $\fl(G) \leq 2$.
\end{proof}

\begin{lemma}
    Suppose that $G$ is a 2-nilpotent completely reducible solvable subgroup of GL$(V)$ for a vector space $V \ne \{0\}$ where $2 \nmid |V|$. Let $r$ be the number of orbits of $G$ on $V$, so that $r>1$. If $r \leq 3$ then $\fl(G) \leq r$. Additionally, if $\fl(G) = 3$, $G \leq H \wr Z_2$, where $H \leq \Gamma(q^n)$.
\end{lemma}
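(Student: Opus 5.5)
We first reduce to small configurations using the orbit count. The zero vector is always a singleton orbit, so $r\geq 2$. If $V=V_1\oplus V_2$ with both summands nonzero and $G$-invariant, then the vectors $0$, $v_1$, $v_2$ and $v_1+v_2$ (with $v_i\in V_i^\#$) lie in pairwise distinct $G$-orbits. Hence $r\geq 4$, which contradicts $r\leq 3$. So $V$ is irreducible. For $r=2$, $G$ is transitive on $V^\#$. For $r=3$, $V^\#$ splits into exactly two orbits.

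We next use the standard reduction to quasi-primitive modules. Suppose $V$ is imprimitive: $V=W_1\oplus\cdots\oplus W_m$ with $m\geq 2$, the summands permuted transitively by $G$, and the induced permutation group $S$ primitive on the blocks, with stabilizer $N=N_G(W_1)$. Take $w\in W_1^\#$. Then $w$, $w+w'$ (with $w'$ in a second block) and, when $m\geq 3$, a vector with three nonzero components lie in distinct nonzero orbits. So $r\leq 3$ forces $m=2$, and then $r=3$ exactly. Moreover the nonzero vectors of $W_1$ must form a single $N$-orbit, since otherwise there would be a further orbit. Thus $N/\mathbf C_N(W_1)=H$ is transitive on $W_1^\#$, and $G\leq H\wr Z_2$. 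Transitive solvable linear groups are classified by Huppert's theorem: either $H\leq\Gamma(q^k)$, or $H$ is one of finitely many exceptional groups, which contain $Q_8$-type extraspecial normal subgroups with an element of order $3$ acting nontrivially. The exceptional groups are not $2$-nilpotent. Here $H$ is $2$-nilpotent, being a section of $G$, so $H\leq \Gamma(q^k)$ and $\fl(H)\leq 2$. Lemma 2.2 then gives $\fl(G)\leq \fl(H)+1\leq 3=r$, and this is also the source of the final assertion in the case $\fl(G)=3$.

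In the quasi-primitive case we apply Lemma 2.3. If $e=1$, then $G\leq\Gamma(q^n)$ and $\fl(G)\leq 2\leq r$, so we are done. If $e>1$, then $F/T$ is a nontrivial $G/F$-module of order $e_i^2$, built from an extraspecial group $E_i$. Because $G$ is $2$-nilpotent, its Sylow $2$-subgroup $P$ has a normal complement $K$. If $e_i$ is odd, then $E_i\leq K$ and $G/F$ acts on $E_i/Z$ through a $2$-nilpotent subgroup of $\Sp(2n_i,p_i)$. If $e_i$ is a power of $2$, then $E_i$ is a normal $2$-subgroup. We then use the small number of orbits: $|G|\geq (|V|-1)/2$, since some orbit contains at least half of $V^\#$. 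Together with $|G|\leq |\Gamma\text{-part}|\cdot e^2\cdot|\Sp|$ and $e\mid n$, this forces $(n,q)$ into a short explicit list. In particular, for $e=2$ we get $n=2$ or $(n,q)=(4,3)$, which is handled directly by Lemma 3.1 and gives $\fl(G)\leq 2$. The larger values of $e$ such as $e=3,4$ should be excluded by the inequality $q^n-1\leq 2|G|$ (transitive groups with $e>1$ being already excluded by Huppert's list and $2$-nilpotency).

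The main obstacle is this quasi-primitive $e>1$ case with $r=3$. There we must bound $|G|$ carefully enough, using $2$-nilpotency, to reduce to the pairs covered by Lemma 3.1, and we must check that every remaining pair either has $\fl(G)\leq 3$ or is impossible. If the counting is not tight enough, we would handle the residual small pairs $(n,q)$ by direct computation, as in the Appendix for Lemma 3.1.
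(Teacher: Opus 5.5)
Your reduction to irreducible $V$ and your treatment of the imprimitive case are sound. Counting nonzero block components to force $m=2$, and then getting $H$ transitive on $W_1^\#$, is a more elementary replacement for the paper's use of Foulser's Lemma 2.6. Huppert's theorem together with $2$-nilpotency then gives $H\le\Gamma(q^{n/2})$, $\fl(G)\le 3$, and the wreath-product description. One small inaccuracy: in Huppert's exceptional case $|V|=3^4$, the odd-order element acting nontrivially on the normal extraspecial $2^{1+4}$ has order $5$, not $3$. The conclusion that these groups are not $2$-nilpotent is unaffected.

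The genuine gap is the primitive (quasi-primitive) case with $e>1$ and $r=3$. You leave it open, and the mechanism you propose for it does not work as stated. The paper disposes of this case with Theorem 1.1 of Foulser, the classification of solvable primitive rank~$3$ groups. That theorem leaves only $(n,q)=(3,4)$, $(2,p^{2k})$, $(4,3)$ and $(4,7)$, which the paper settles by Lemma 3.1 and computer checks.

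Your substitute is the inequality $q^n-1\le 2|G|$ combined with the structural bounds of Lemma 2.3 and $2$-nilpotency. It does not exclude the cases you say it excludes:
\begin{itemize}
\item For $e=4$ and $(n,q)=(4,7)$, $E$ is absolutely irreducible, so $T$ consists of scalars and $|F|=32\cdot 6/2=96$. Here $G=A$, and $2$-nilpotency only forces $A/F$ to be a $2$-subgroup of $\Sp(4,2)$. Hence $|G|\le 1536$, which exceeds $(7^4-1)/2=1200$.
\item For $(n,q)=(4,3)$, the same reasoning gives $|G|\le 512$, against $40$.
\item For $e=3$ and $(n,q)=(3,7)$, with $A/F$ a $2$-nilpotent subgroup of $\mathrm{SL}(2,3)$, you get $|G|\le 54\cdot 8=432$, against $171$.
\end{itemize}
So the statement that $e=3,4$ ``should be excluded by the inequality'' is false. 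Your claim that $e=2$ leaves only $n=2$ or $(4,3)$ is asserted rather than derived. Each surviving pair needs a finer orbit-length or computational analysis, which amounts to reproving the relevant part of Foulser's classification; none of that is in the proposal.

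Finally, the target you set for the residual cases (``$\fl(G)\le 3$ or impossible'') is too weak for the lemma's second assertion. To conclude that $\fl(G)=3$ forces $G\le H\wr Z_2$, you need $\fl(G)\le 2$ in every primitive case. That is what the paper's case analysis delivers.
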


\begin{proof}
    If $G$ were not irreducible, $r \geq 2^2 = 4$. Therefore, let $G$ is a 2-nilpotent irreducible subgroup of GL$(n,q)$ for a prime power $q$ where $|V| = q^n$. Since $V \ne \{0\}$, $r>1$.
    
    If $G$ is a semilinear group, then $\fl(G) \leq 2$. Assume $G$ is not a subgroup of a semilinear.
    
    When $r=2$, $G$ acts transitively on the non-zero vectors of $V$. By Theorem 6.8 of \cite{Manz/Wolf}, $G$ must be semilinear or a few exceptional cases, all of which are not 2-nilpotent. This is a contradiction.
    
    Consider $r=3$. If $V$ is imprimitive, then $G$ is isomorphic to a subgroup of $H \wr S$. If we let $s=r(H)$ and $S$ be a permutation group on $m$ letters, then $r(G) \geq {m+s-1\choose m}$ by Lemma 2.6 of \cite{Foulser}. If $s \geq 3$ or if $m \geq 3$, then $G$ has at least 4 orbits on $V$. We thus assume $s=2$ and $m=2$. Since $s=2$, then $\fl(H) \leq 2$ by the previous paragraph. We then have $\fl(G) \leq \fl(H) + \fl(Z_2) \leq 3$.
    
    If $V$ is primitive, by Theorem 1.1 of \cite{Foulser}, $G$ must satisfy one of 4 cases:
    \begin{enumerate}[ {(}a{)} ]
        \item $(n,q)=(3,4)$; because $2 \nmid q$, we disregard this case.
        \item $(n,q)=(2,p^{2k})$; by Lemma 3.1, $\fl(G) \leq 2$.
        \item $(n,q)=(4,3)$; $\fl(G) \leq 2$ by our GAP code (see Appendix).
        \item $(n,q)=(4,7)$ with $G_0$ containing orbits of length 480 and 1980; $\fl(G) \leq 2$ by our GAP code (see Appendix).
    \end{enumerate}
    Therefore, $\fl(G) \leq 2$.

    % Now, suppose $G$ is not irreducible. We induct on $\dim(V)$. Write $V = V_1 \oplus V_2$ for completely reducible modules $V_1$ and $V_2$. Denote $C_i = \mathbf{C}_G(V_i)$. Then, $G$ is isomorphic to a subgroup of $G/C_1 \times G/C_2$. By the inductive hypothesis, $\fl(G/C_1) \leq 3$ and $\fl(G/C_2) \leq 3$, so $\fl(G) \leq 3$.
\end{proof}

\begin{thm}
    Let $V$ be a faithful irreducible quasi-primitive $G$-module where $2 \nmid |V|$ for a finite solvable 2-nilpotent group $G$. Then, $G$ has at least 1 regular orbit on $V$ or $\fl(G) \leq 4$.
\end{thm}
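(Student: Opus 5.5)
We will use the structure from Lemma 2.3 for the quasi-primitive module. This gives normal subgroups $U\le T\le F\le A\le G$ with $F=\mathbf F(G)=ET$, $|E/Z|=e^2$, and $e=e_1\cdots e_s$ dividing $n$. The first case is $e=1$. Then part (8) gives $G\le\Gamma(q^n)$ and $\fl(G)\le 2$, and we are done. So we assume $e>1$.

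Next we count Fitting layers. The group $T$ is abelian (it is cyclic up to index $2$) and lies in $F$, so $F$ is the first layer. Part (4) makes $A/F$ a faithful completely reducible module group on $F/T=\bigoplus W_i$. Part (6) makes $G/A$ abelian, contributing one layer at the top. Hence
\[
\fl(G)\le 1+\fl(A/F)+1 ,
\]
and it suffices to show $\fl(A/F)\le 2$ unless $G$ has a regular orbit on $V$. By part (5), $A/\mathbf C_A(E_i/Z)$ embeds in $\Sp(2n_i,p_i)$. The hypotheses pass down: $G$ is $2$-nilpotent and $q$ is odd. If every $p_i$ were odd, then $A/F$ would be a $2$-nilpotent group acting faithfully on a module of odd order, and we could work with the structure of $2$-nilpotent subgroups of $\Sp(2n_i,p_i)$.

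The key case distinction is by the size of $e$. By the standard Yang / Moret\'o--Wolf analysis of quasi-primitive modules, $G$ has a regular orbit on $V$ except for an explicit finite list of small $e$. The usual list is $e\in\{2,3,4,5,6,7,8,9,16\}$ with $q$ correspondingly bounded. For large $e$ the regular orbit follows by comparing $|G|\le |T|\,e^2\,|\Sp|$-type bounds with $|V|=q^n\ge q^e$ and counting fixed points of prime-order elements. So we only need to handle the small $e$. There the $2$-nilpotent hypothesis is decisive. An extraspecial $2$-group $E_i$ with $p_i=2$ forces $A/\mathbf C_A(E_i/Z)\le\Sp(2n_i,2)$. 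Being a $2$-nilpotent irreducible subgroup whose normal $2'$-subgroup acts, we expect it to have Fitting length at most $2$. For example, in $\Sp(2,2)\cong S_3$ the $2$-nilpotent subgroups are abelian. For odd $p_i$ (e.g. $e=3,5,7,9$) the subgroups of $\Sp(2,p_i)$ and $\Sp(4,3)$ act on $W_i$ of odd order. Then Lemma 3.1 applied to $A/\mathbf C_A(E_i/Z)$, for $n=2$ or $(n,q)=(4,3)$, gives Fitting length $\le 2$. Taking the subdirect product over $i$ keeps $\fl(A/F)\le 2$. If needed, Lemma 3.2 on orbit counts handles the remaining borderline cases.

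The main obstacle is the regular-orbit step for large $e$ and the exceptional small cases. The earlier lemmas only bound Fitting length; they do not produce regular orbits. I would first import the known result that a quasi-primitive solvable $G$ has a regular orbit except when $e$ lies in the finite list above, rather than redoing that counting. Then I would check each exceptional $(e,q)$ against the $2$-nilpotency restriction, using GAP as in the Appendix for the residual cases ($e=4,8,16$ with small $q$). The goal in each case is to confirm that $A/F$ has Fitting length at most $2$, so that $\fl(G)\le 4$.
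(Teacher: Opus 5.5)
Your skeleton matches the paper's. You use Lemma 2.3, the Holt--Yang regular-orbit result for large $e$, the bound $\fl(G)\le 1+\fl(A/F)+1$, and you control $\fl(A/F)$ through $G/\mathbf C_G(F/T)\hookrightarrow\prod_i G/\mathbf C_G(W_i)$. The paper's residual list is $e\in\{2,3,4,8,9,16\}$. Keeping $5,6,7$ as well is harmless, since $e=5,7$ fall under Lemma 3.1 with $n=2$. For odd $p_i$ you invoke Lemma 3.1 as the paper does. Apply it to $G/\mathbf C_G(W_i)$, which is irreducible on $W_i$, rather than to $A/\mathbf C_A(E_i/Z)$, which need not be; then pass to the normal subgroup induced by $A$.

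There is a genuine gap in the case $p_i=2$, that is, $e\in\{2,4,8,16\}$ and the $2$-part of $e=6$. There you only say you \emph{expect} a $2$-nilpotent irreducible subgroup of $\Sp(2n_i,2)$ to have Fitting length at most $2$, and you defer to a GAP search you have not carried out. None of the paper's lemmas supplies this. Treating the image as an arbitrary $2$-nilpotent subgroup of $\Sp(4,2)$, $\Sp(6,2)$ or $\Sp(8,2)$ also discards the information that settles the case.

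The missing observation is the following.
\begin{enumerate}
\item $E_i$ is a normal $2$-subgroup of $G$, being characteristic in $E\trianglelefteq G$.
\item $N=O_{2'}(G)$ is a normal Hall $2'$-subgroup, so $[E_i,N]\le E_i\cap N=1$.
\item Hence $N\le\mathbf C_G(W_i)$, and $G/\mathbf C_G(W_i)$ is a $2$-group acting on the $\mathbb{F}_2$-module $W_i$.
\item Such a group has a nonzero fixed-point submodule, so irreducibility forces the action to be trivial.
\end{enumerate}
Thus the $p_i=2$ factors contribute nothing to $\fl(A/F)$. Only the odd-$p_i$ factors matter, and $\fl(A/F)\le 2$ follows from Lemma 3.1 with no further computation.

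Your phrase ``whose normal $2'$-subgroup acts'' has it backwards. That subgroup acts trivially on $E_i/Z$, and this is exactly where the $2$-nilpotency hypothesis does its work.
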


\begin{proof}
    First, by \cite{HoltYang}, if $e = 5,6,7,$ or $e \geq 10$ and $e \ne 16$, $G$ has at least one regular orbit $v$ on $V$. If $e=1$, $\fl(G) \leq 2$, so we only need to consider $e=2,3,4,8,9,16$.

    If $e=2^n$, then each $E_i$ is a 2-group. Because $G$ is 2-nilpotent, there exists a normal Hall $2'$-subgroup $N$ of $G$. Then $[E_i,N]\leq E_i\cap N=1$, so the image of $G$ on $W_i$ is a 2-group. A finite 2-group acting on a nonzero vector space over $\mathds F_2$ has a nonzero fixed vector; since $W_i$ is irreducible, the action is trivial. Thus $G/\mathbf C_G(W_i)=1$. Otherwise $e=3$ or $9$, and Lemma 3.1 gives
    \[
        \fl\bigl(G/\mathbf C_G(W_i)\bigr)\leq 2.
    \]
    Since $F/T=\bigoplus_i W_i$ and $\mathbf C_G(F/T)=\bigcap_i\mathbf C_G(W_i)$, the diagonal map gives
    \[
        G/\mathbf C_G(F/T)\hookrightarrow \prod_i G/\mathbf C_G(W_i).
    \]
    As $A/F$ acts faithfully on $F/T$, it follows that
    \[
        \fl(A/F)\leq \fl\bigl(G/\mathbf C_G(F/T)\bigr)
        \leq \max_i \fl\bigl(G/\mathbf C_G(W_i)\bigr)\leq 2.
    \]
    Hence $\fl(G)\leq \fl(F)+\fl(A/F)+\fl(G/A)\leq 1+2+1=4$.
\end{proof}

\begin{thm}
    Suppose that $H$ is a linear group on $V_1$ and $S$ is a solvable primitive permutation group on $\Omega$, with $|\Omega|=m$. Then $V = V_1 + V_2 + \cdot \cdot \cdot + V_m$ is an $H \wr S$-module where $V_i$ are copies of $V_1$. Let $G$ be a 2-nilpotent subgroup of $H \wr S$ whose projection onto $S$ is $S$.

    \begin{enumerate}
        \item Suppose $H$ has at least 2 orbits on $V_1$ such that $C_H(v_1) \leq L, L \triangleleft H$. If also $m > 9$ then $G$ has at least 4 orbits on $V$ such that $\mathbf{C}_G(v) \leq (L \times L \times \cdots \times L) \cap G$.
        \item Suppose $H$ has at least 3 orbits on $V_1$ such that $C_H(v_1) \leq L, L \triangleleft H$. If also $m > 4$ then $G$ has at least 4 orbits on $V$ such that $\mathbf{C}_G(v) \leq (L \times L \times \cdots \times L) \cap G$.
        \item Suppose $H$ has at least 4 orbits on $V_1$ such that $C_H(v_1) \leq L, L \triangleleft H$. Then $G$ has at least 4 orbits on $V$ such that $\mathbf{C}_G(v) \leq (L \times L \times \cdots \times L) \cap G$.
    \end{enumerate}
\end{thm}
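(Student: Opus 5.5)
This is the standard "colouring/partition" argument for wreath products, going back to Gluck and Yang. Fix representatives of orbits of $H$ on $V_1$ whose stabilizers lie in $L$. Call them $u_1,\dots,u_k$, where $k\geq 2,3,4$ in cases (1), (2), (3) respectively.

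For a function $f:\Omega\to\{1,\dots,k\}$, put
\[
v_f=(u_{f(1)},\dots,u_{f(m)})\in V .
\]
Let $g=(h_1,\dots,h_m)\sigma\in G$ fix $v_f$. Then $\sigma$ permutes the coordinates and $h_i$ maps some $u_{f(j)}$ to $u_{f(i)}$. Since distinct $u$'s lie in distinct $H$-orbits, $f$ must be $\sigma$-invariant. So if the partition of $\Omega$ given by the fibres of $f$ has trivial stabilizer in $S$, then $\sigma=1$. Each $h_i$ then lies in $\mathbf C_H(u_{f(i)})\leq L$, so $\mathbf C_G(v_f)\leq (L\times\cdots\times L)\cap G$.

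The problem therefore reduces to a purely permutation-group question. We need at least four vectors $v_f$ in distinct $G$-orbits, each with $f$ having trivial setwise stabilizer in $S$. Two vectors $v_f,v_{f'}$ lie in the same $G$-orbit only if $f'=f\circ\tau$ for some $\tau\in S$, so it suffices to produce four $S$-inequivalent such colourings. Here the 2-nilpotence of $G$ enters: its image $S$ is a 2-nilpotent solvable primitive permutation group. By Lemma 2.1, $S=MS_\alpha$ with $M$ elementary abelian of order $q^n$ and $S_\alpha$ a faithful irreducible 2-nilpotent linear group on $M$.

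\textbf{Case (3).} With four colours, I would quote the Seress/Gluck result that every solvable primitive group has a regular orbit on partitions into at most four parts, or a distinguishing 2-colouring with few exceptions. To get four inequivalent colourings, I would take a regular colouring $f$ and compose it with permutations of the colour set. Colours are distinguished by $H$-orbits, so colour permutations produce genuinely different $G$-orbits unless they coincide up to $S$. Counting these, plus the count $\binom{m+3}{m}$-type bounds from Lemma 2.6 of \cite{Foulser}, should give at least four. Small $m$, such as $m=2,3$, would be handled directly, using the fact that $S$ is a 2-group or $Z_3$ for $m=3$, since a 2-nilpotent $S_3$ is not allowed.

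\textbf{Case (2).} With three colours and $m>4$, I would count regular colourings. The number of colourings with nontrivial stabilizer is at most $\sum_{x}3^{c(x)}$, summed over elements $x$ of prime order, where $c(x)$ is the number of cycles of $x$. For $S\leq\mathrm{AGL}(n,q)$ primitive, $c(x)\leq m/2+\dots$, so the number of regular 3-colourings is at least $3^m-|S|\,3^{\lceil m/2\rceil}$. Dividing by $|S|$ gives at least four orbits once $|S|$ is not too large relative to $3^{m}$. Using $|S|\leq m\cdot|S_\alpha|$ with $S_\alpha$ a 2-nilpotent irreducible subgroup of $\mathrm{GL}(n,q)$, this holds for all but small $m$. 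The remaining small degrees $5\leq m\leq$ (some bound) I would check by listing the 2-nilpotent primitive solvable groups of those degrees, or with GAP as in the Appendix.

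\textbf{Case (1).} This is the same scheme with two colours and $m>9$: bound the number of 2-colourings fixed by nontrivial elements and compare with $2^m/|S|$.

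I expect case (1) at the borderline degrees $m=10,\dots,$ roughly $32$ to be the main obstacle. There the crude counting may fail, for instance for $S=\mathrm{AGL}(1,q)$-type or $\mathrm{A\Gamma L}$ groups of degree 16 or 27. I would first try Burnside-type counting, and otherwise enumerate the primitive solvable 2-nilpotent groups of those degrees explicitly.
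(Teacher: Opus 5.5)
Your reduction is correct and matches the paper's trivially centralized partitions. A colouring $f$ by representatives of the good $H$-orbits with trivial stabilizer in $S$ forces $\sigma=1$ and $\mathbf C_G(v_f)\le (L\times\cdots\times L)\cap G$, and $S$-inequivalent colourings lie in distinct $G$-orbits. The gap is in the permutation-group part, which is all the remaining content.

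First, $S_3$ \emph{is} 2-nilpotent, since $A_3$ is a normal Hall $2'$-subgroup. It is the extremal case: its only regular colourings are the injective ones, so it gives exactly $\binom{k}{3}$ good orbits, which is four for $k=4$ and one for $k=3$. Your treatment of $m=3$ drops it.

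Second, in (3), colour permutations of a regular colouring with $c$ classes yield exactly $\frac{k!/(k-c)!}{|P|}\ge\binom{k}{c}$ orbits, where $P\hookrightarrow\mathrm{Sym}(c)$ is the stabilizer of the unordered partition. So a regular partition into four parts can produce only one orbit when $k=4$. What you need is a regular colouring with at most three colours. This does hold for every 2-nilpotent solvable primitive group: two colours suffice if $S$ has a regular orbit on the power set, and the exceptions of degree at most $9$ are checked by hand (in prime degree, colour $0$, $1$ and the rest). With that input your argument proves (3) without the paper's tables. Foulser's bound $\binom{m+3}{m}$ counts all orbits, not good ones, so it does not help.

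Third, for (1) and (2), the estimate $3^m-|S|\,3^{\lceil m/2\rceil}$ requires $c(x)\le\lceil m/2\rceil$, and this fails. An element of $S_\alpha$ fixing a hyperplane of $\mathbb F_q^n$ has $(m+m/q)/2$ cycles; for example, $\mathrm{diag}(1,-1)$ in $3^2\rtimes D_8$ has $6$ cycles on $9$ points. More importantly, the residual small degrees, where the statement actually has content, are neither determined nor checked.

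The paper handles this with Gluck's theorem (Manz--Wolf, Theorem 5.6). If $S$ has a regular orbit on the power set, a regular $\Delta$ and its complement give $s(s-1)/2$ good orbits (or $s(s-1)$ for odd $m$). In that case Yang's lemma supplies four good orbits for $s=3$, $m>4$ and for $s=2$, $m>9$. Otherwise $S$ is one of finitely many groups of degree at most $9$. The paper checks the 2-nilpotent ones, $S_3$, $F_{10}$, $F_{20}$, $F_{42}$, $3^2\rtimes D_8$ and $3^2\rtimes SD_{16}$, by computer.

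If you want to keep a counting route, note that 2-nilpotence forces $q$ odd unless $m=2$. For $q=2$ the normal Hall $2'$-subgroup centralizes $M$, hence lies in $\mathbf C_S(M)=M$, so $S$ is a primitive $2$-group, i.e.\ $S=Z_2$. Consequently:

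(a) Degree $16$ and $\mathrm{A\Gamma L}(1,8)$ never occur.

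(b) In case (2), a regular subset already gives $s(s-1)=6$ good orbits, leaving only the exceptions of degree $5$, $7$ and $9$.

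(c) Only case (1) needs a genuine count, namely of at least four regular $S$-orbits on subsets, using correct cycle counts.
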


\begin{proof}
    $G$ is a subgroup of $H \wr S$, its projection onto $S$ is $S$, and $S$ is a solvable primitive permutation group on $\Omega$. Let $H$ have at least $s$ orbits of elements in $V_1$ whose centralizer in $H$ is in $L$ where $L \triangleleft H.$
    
    We call a partition "trivially centralized" when $\Omega$ can be partitioned into a disjoint union of non-empty subsets $\Delta_1,...\Delta_j$ and $2 \leq j \leq s$ such that $\stab_S(\Delta_1) \cap \cdots \cap \stab_S(\Delta_j) = 1$. Note that when a partition is trivially centralized, $\mathbf{C}_G(v) \leq (L \times L \times \cdots \times L) \cap G$. We refer to the number of orbits of $G$ on $V$ where this occurs as "good orbits."

    Suppose $S$ has a regular orbit on $\mathfrak{B}(S)$, the power set of $\Omega$. Then, there exists some $\Delta_1 \subset \Omega$ such that $\stab_S(\Delta_1) = 1$. Let $\Delta_2 = \Omega \setminus \Delta_1$. This partition is trivially centralized. If $m$ is odd, we have $s$ choices for the orbit on $\Delta_1$ and $s-1$ choices for the orbit on $\Delta_2$, so $G$ has at least $s(s-1)$ good orbits. If $m$ is even, $\Delta_1$ and $\Delta_2$ may have the same number of elements; then, $G$ has at least $\frac{s(s-1)}{2}$ good orbits. Therefore, we have:

    \begin{enumerate}
        \item If $s \geq 4$, $G$ has at least $\frac{4 \cdot 3}{2} > 4$ good orbits.
        \item If $s=3$ and $m > 4$, $G$ has at least $4$ good orbits by \cite{Yang}. Additionally, if $m=3$, $G$ has at least $3 \cdot 2 > 4$ good orbits. Therefore, if $s=3$ and $m\ne 2,4$, $G$ has at least 4 good orbits.
        \item If $s=2$ and $m>9$, $G$ has at least $4$ good orbits by \cite{Yang}.
    \end{enumerate}

    Next, assume that $S$ does not have a regular orbit on $\mathfrak{B}(S)$. By Theorem 5.6 of \cite{Manz/Wolf}, we only have a few additional cases. Because $S$ must be 2-nilpotent, $S$ must be isomorphic to $S_3, F_{10}, F_{20}, F_{42}, $ or the semidirect product of $Z_3 \times Z_3$ with $D_8$ or $SD_{16}$.
    
    To calculate the number of orbits of $H \wr S$, we define the following terms. Denote $s=r(H)$. Let $\Pi$ be the set of all partitions of $\Omega$ and $\pi$ be an element of $\Pi$. Let $B(\pi) = n_1 + \cdots + n_j$, where $n_i$ is the number of blocks in $\pi$ with the $i$th largest size, and $F(\pi) = n_1! \cdots n_j!$. Define $N(\pi)$ as the number of orbits of $S$ on $\pi$. By \cite{Permutation}, the number of orbits can be calculated as follows:

    \begin{equation}
        r(H \wr S) = \sum_{\pi \in \Pi} N(\pi) \cdot \frac{P(s, B(\pi))}{F(\pi)}.
    \end{equation}

    To count the number of good orbits, we restrict $N(\pi)$ to only count the orbits of trivially centralized partitions. Then, using the computer program GAP (see Appendix), we find the following:
    
    \begin{table}[H]
        \centering
        \resizebox{5cm}{!}{
        \begin{tabular}{cccc}
            \hline
            $B(\pi)$ & Type & $N(\pi)$ & $F(\pi)$ \\
            \hline
            3 & $(1,1,1)$ & 1 & 6 \\
            \hline
        \end{tabular}
        }
        \caption{Calculations for $S_3$}
        \label{tab:table1}
    \end{table}

    \begin{table}[H]
        \centering
        \resizebox{5cm}{!}{
        \begin{tabular}{cccc}
            \hline
            $B(\pi)$ & Type & $N(\pi)$ & $F(\pi)$ \\
            \hline
            5 & $(1,1,1,1,1)$ & 12 & 120 \\
            4 & $(1,1,1,2)$ & 6 & 6 \\
            3 & $(1,1,3)$ & 2 & 2 \\
            3 & $(1,2,2)$ & 2 & 2 \\
            \hline
        \end{tabular}
        }
        \caption{Calculations for $F_{10}$}
        \label{tab:table2}
    \end{table}

    \begin{table}[H]
        \centering
        \resizebox{5cm}{!}{
        \begin{tabular}{cccc}
            \hline
            $B(\pi)$ & Type & $N(\pi)$ & $F(\pi)$ \\
            \hline
            5 & $(1,1,1,1,1)$ & 6 & 120 \\
            4 & $(1,1,1,2)$ & 3 & 6 \\
            3 & $(1,1,3)$ & 1 & 2 \\
            3 & $(1,2,2)$ & 1 & 2 \\
            \hline
        \end{tabular}
        }
        \caption{Calculations for $F_{20}$}
        \label{tab:table3}
    \end{table}

    \begin{table}[H]
        \centering
        \resizebox{5cm}{!}{
        \begin{tabular}{cccc}
            \hline
            $B(\pi)$ & Type & $N(\pi)$ & $F(\pi)$ \\
            \hline
            7 & $(1,1,1,1,1,1,1)$ & 120 & 5040 \\
            6 & $(1,1,1,1,1,2)$ & 60 & 120 \\
            5 & $(1,1,1,1,3)$ & 20 & 24 \\
            5 & $(1,1,1,2,2)$ & 30 & 12 \\
            4 & $(1,1,1,4)$ & 5 & 6 \\
            4 & $(1,1,2,3)$ & 10 & 2 \\
            4 & $(1,2,2,2)$ & 14 & 6 \\
            3 & $(1,1,5)$ & 1 & 2 \\
            3 & $(1,2,4)$ & 2 & 1 \\
            3 & $(1,3,3)$ & 3 & 2 \\
            3 & $(2,2,3)$ & 4 & 2 \\
            \hline
        \end{tabular}
        }
        \caption{Calculations for $F_{42}$}
        \label{tab:table4}
    \end{table}

    \begin{table}[H]
        \centering
        \resizebox{6cm}{!}{
        \begin{tabular}{cccc}
            \hline
            $B(\pi)$ & Type & $N(\pi)$ & $F(\pi)$ \\
            \hline
            9 & $(1,1,1,1,1,1,1,1,1)$ & 5040 & 362880 \\
            8 & $(1,1,1,1,1,1,1,2)$ & 2520 & 5040 \\
            7 & $(1,1,1,1,1,2,2)$ & 1260 & 240 \\
            6 & $(1,1,1,2,2,2)$ & 624 & 36 \\
            5 & $(1,2,2,2,2)$ & 300 & 24 \\
            7 & $(1,1,1,1,1,1,3)$ & 840 & 720 \\
            6 & $(1,1,1,1,2,3)$ & 420 & 24 \\
            5 & $(1,1,2,2,3)$ & 204 & 4 \\
            4 & $(2,2,2,3)$ & 92 & 6 \\
            5 & $(1,1,1,3,3)$ & 140 & 12 \\
            4 & $(1,2,3,3)$ & 64 & 2 \\
            3 & $(3,3,3)$ & 18 & 6 \\
            6 & $(1,1,1,1,1,4)$ & 210 & 120 \\
            5 & $(1,1,1,2,4)$ & 102 & 6 \\
            4 & $(1,2,2,4)$ & 46 & 2 \\
            4 & $(1,1,3,4)$ & 32 & 2 \\
            3 & $(2,3,4)$ & 12 & 1 \\
            3 & $(1,4,4)$ & 6 & 2 \\
            5 & $(1,1,1,1,5)$ & 42 & 24 \\
            4 & $(1,1,2,5)$ & 18 & 2 \\
            3 & $(2,2,5)$ & 6 & 2 \\
            3 & $(1,3,5)$ & 4 & 1 \\
            4 & $(1,1,1,6)$ & 6 & 6 \\
            3 & $(1,2,6)$ & 2 & 1 \\
            \hline
        \end{tabular}
        }
        \caption{Calculations for $(Z_3 \times Z_3) \rtimes D_8$}
        \label{tab:table5}
    \end{table}

    \begin{table}[H]
        \centering
        \resizebox{6cm}{!}{
        \begin{tabular}{cccc}
            \hline
            $B(\pi)$ & Type & $N(\pi)$ & $F(\pi)$ \\
            \hline
            9 & $(1,1,1,1,1,1,1,1,1)$ & 2520 & 362880 \\
            8 & $(1,1,1,1,1,1,1,2)$ & 1260 & 5040 \\
            7 & $(1,1,1,1,1,2,2)$ & 630 & 240 \\
            6 & $(1,1,1,2,2,2)$ & 312 & 36 \\
            5 & $(1,2,2,2,2)$ & 150 & 24 \\
            7 & $(1,1,1,1,1,1,3)$ & 420 & 720 \\
            6 & $(1,1,1,1,2,3)$ & 210 & 24 \\
            5 & $(1,1,2,2,3)$ & 102 & 4 \\
            4 & $(2,2,2,3)$ & 46 & 6 \\
            5 & $(1,1,1,3,3)$ & 70 & 12 \\
            4 & $(1,2,3,3)$ & 32 & 2 \\
            3 & $(3,3,3)$ & 9 & 6 \\
            6 & $(1,1,1,1,1,4)$ & 105 & 120 \\
            5 & $(1,1,1,2,4)$ & 51 & 6 \\
            4 & $(1,2,2,4)$ & 23 & 2 \\
            4 & $(1,1,3,4)$ & 16 & 2 \\
            3 & $(2,3,4)$ & 6 & 1 \\
            3 & $(1,4,4)$ & 3 & 2 \\
            5 & $(1,1,1,1,5)$ & 21 & 24 \\
            4 & $(1,1,2,5)$ & 9 & 2 \\
            3 & $(2,2,5)$ & 3 & 2 \\
            3 & $(1,3,5)$ & 2 & 1 \\
            4 & $(1,1,1,6)$ & 3 & 6 \\
            3 & $(1,2,6)$ & 1 & 1 \\
            \hline
        \end{tabular}
        }
        \caption{Calculations for $(Z_3 \times Z_3) \rtimes SD_{16}$}
        \label{tab:table6}
    \end{table}

    Applying the formula, we find $r(H \wr S) \geq 4$ in all exceptional cases except for $S_3$, which only gives at least 4 good orbits for $s \geq 4$.
\end{proof}

\begin{thm}
    Let $V$ be a faithful irreducible $G$-module where $2 \nmid |V|$ for a finite solvable 2-nilpotent group $G$. There exists $K \triangleleft G$ such that $\fl(K) \leq 4$ and $G$ has at least 1 nonzero orbit of elements $v \in V$ with $\mathbf{C}_G(x) \leq K$. Furthermore, if $G$ has less than 4 orbits of elements $v \in V$ with $\mathbf{C}_G(x) \leq K$, then $G$ has less than 4 orbits on $V$.
\end{thm}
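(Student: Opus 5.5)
We argue by induction on $\dim V$, following the standard reduction used for large orbit theorems (as in \cite{Yang}). If $V$ is quasi-primitive, Theorem 3.3 finishes the proof. If $G$ has a regular orbit, take $K=1$: some nonzero $v$ has $\mathbf C_G(v)=1\le K$. Otherwise $\fl(G)\le 4$, and we take $K=G$. Then every nonzero orbit consists of vectors with $\mathbf C_G(v)\le K$, so the "furthermore" clause is a tautology: if fewer than $4$ orbits have this property, then $G$ has fewer than $4$ orbits on $V$. In the regular-orbit case with $K=1$, the second clause needs care. If $G$ has at least $4$ orbits on $V$ but fewer than $4$ regular orbits, we instead pass to $K=G$ whenever $\fl(G)\le 4$. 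When $\fl(G)>4$, we need to show that a regular orbit forces several regular orbits. I expect this to follow from the counting in \cite{HoltYang}, which for $e\ge 5$ actually gives many regular orbits.

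If $V$ is not quasi-primitive, choose $N\trianglelefteq G$ such that $V_N$ is not homogeneous. Then $V=V_1\oplus\cdots\oplus V_m$ is induced from $H_0=\mathbf N_G(V_1)$, and we may refine the decomposition so that the permutation action $S$ of $G$ on $\{V_1,\dots,V_m\}$ is primitive. Then $G$ embeds in $H\wr S$, where $H=H_0/\mathbf C_{H_0}(V_1)$ acts irreducibly on $V_1$. Both $H$ (a section of $G$) and $S$ (a quotient of $G$) are 2-nilpotent and solvable. By induction, $H$ has a normal subgroup $L$ with $\fl(L)\le 4$ and at least one nonzero orbit of vectors $v_1$ with $\mathbf C_H(v_1)\le L$. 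Moreover, either there are at least $4$ such orbits, or $H$ has at most $3$ orbits on $V_1$.

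Set $K=(L\times\cdots\times L)\cap G$. This is normal in $G$, since $G$ permutes the factors and $L\trianglelefteq H$ is canonical up to the $G$-conjugation identifying the $V_i$. We also have $\fl(K)\le\fl(L)\le 4$, using Lemma 2.4 and the fact that a subgroup of a direct product has Fitting length at most that of the factors. Now apply Theorem 3.4. If $H$ has at least $4$ good orbits, part (3) gives at least $4$ orbits of $V$ with $\mathbf C_G(v)\le K$. If $H$ has $2$ or $3$ good orbits, parts (1) and (2) handle $m>9$ and $m>4$ respectively.

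The remaining cases are the main obstacle. These are small $m$ with $s\in\{2,3\}$, and $S=S_3$ with $s\le 3$. In these cases, induction says $H$ has at most $3$ orbits, so Lemma 3.2 gives $\fl(H)\le 3$, and in fact $\fl(H)\le 2$ unless $H\le \Gamma\wr Z_2$. Then $\fl(G)\le \fl(H)+\fl(S)$ by Lemma 2.2. When this is at most $4$, which is automatic when $\fl(S)\le 2$ (for instance $m=2,3,4$ or $S=S_3$) and $\fl(H)\le 2$, we set $K=G$ and the conclusion becomes trivial. In the residual subcases ($\fl(H)=3$, or $m\in\{5,\dots,9\}$ with $s=2$) I would count directly. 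I would use the Foulser-type bound $r(G)\ge\binom{m+s-1}{m}$ from Lemma 2.6 of \cite{Foulser} to show that $G$ has at least $4$ orbits of the good type. This works by picking $\Delta_1$ with trivial setwise stabilizer where $S$ has a regular orbit on subsets, and falling back on the tables in Theorem 3.4 otherwise. I expect this bookkeeping to be the delicate part. The key point to check is that whenever $G$ has at least $4$ orbits on $V$ but fewer than $4$ good orbits, we can still choose $K=G$ with $\fl(G)\le 4$.
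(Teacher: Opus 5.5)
Your overall structure matches the paper's: Theorem 3.3 in the quasi-primitive case; otherwise $G\le H\wr S$ with $S$ primitive, induction on $H$, and Lemma 3.4 when $H$ has four good orbits or $m$ is large. The gap is in the residual cases, where the tool you propose cannot succeed. Foulser's bound $r(G)\ge\binom{m+s-1}{m}$ counts all orbits, not good ones. The good-orbit count behind Lemma 3.4 uses only trivially centralized partitions into at most $s$ blocks. For $s=2$ this needs a subset $\Delta_1\subseteq\Omega$ with $\stab_S(\Delta_1)=1$. No such subset exists for $S=F_{10},F_{20},F_{42},(Z_3\times Z_3)\rtimes D_8,(Z_3\times Z_3)\rtimes SD_{16}$, which have degree $5$, $7$ or $9$ and so fall in your residual range. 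Correspondingly, every row of Tables 2--6 has $B(\pi)\ge 3$, so $P(2,B(\pi))=0$. For $S=S_3$ and $s=3$, the only admissible type is $(1,1,1)$, which certifies exactly one good orbit. So in these cases your counting certifies at most one good orbit, and no bookkeeping will raise that to four.

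The missing idea is that in every residual case you can prove $\fl(G)\le 4$ directly and take $K=G$, using that $S$ and $G$ are $2$-nilpotent. First, a $2$-nilpotent primitive solvable group of degree $m\le 9$ has $\fl(S)\le 2$:
\begin{itemize}
\item Degrees $4$ and $8$ cannot occur. The normal $2$-complement would centralize the self-centralizing regular normal $2$-subgroup, forcing $S$ to be a $2$-group. That is impossible, since the point stabilizer acts irreducibly on $2^2$ or $2^3$.
\item For $m=3,5,7$ the point stabilizer lies in $Z_{m-1}$.
\item For $m=9$ the point stabilizer is a $2$-group, because an element of order $3$ in $\mathrm{GL}(2,3)$ is unipotent and its fixed line would be invariant.
\end{itemize}
Hence $r(H)=2$ and $m\le 9$ give $\fl(G)\le 2+2$ with no counting at all. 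Second, take $r(H)=3$ with $\fl(H)=3$ and $m\le 4$. If $m=2$ or $S=Z_3$, you get $3+1$. For $S=S_3$ you need the paper's argument. By Lemma 3.2, $H\le L\wr Z_2$ with $L\le\Gamma(q^n)$. The odd-order normal $2$-complement of $G$ maps into a Sylow $3$-subgroup of $Z_2\wr S_3$, so it lies in a conjugate of $L^6\rtimes A_3$, which has Fitting length at most $3$. Since $G$ modulo this complement is a $2$-group, $\fl(G)\le 4$.

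Two smaller points. Once $H$ has fewer than four good orbits, you must replace $L$ by $H$, as the paper does with $K=(H\times\cdots\times H)\cap G$. The inductive $L$ may be $1$ and leave $s=1$, in which case Lemma 3.4 gives nothing. Your worry about the ``furthermore'' clause when $K=1$ in the quasi-primitive case is legitimate; the paper sets $K=1$ there without comment. You can narrow it, though: for $e\in\{2,3,4,8,9,16\}$ the proof of Theorem 3.3 gives $\fl(G)\le 4$ whether or not a regular orbit exists, so $K=G$ works. Only the Holt--Yang range of $e$ actually requires at least four regular orbits.
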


\begin{proof}
    Suppose $V$ is a faithful quasi-primitive $G$-module. If $G$ has at least 1 regular orbit on $V$, then we choose $K=1$. Otherwise, let $K=G$. By Theorem 3.3, we have $\fl(K) = \fl(G) \leq 4$.

    Suppose $V$ is not quasi-primitive; then, there exists $N \triangleleft G$ such that $V_N = V_1 \oplus \cdots \oplus V_m$ for $m > 1$ homogeneous components $V_i$ of $V_N$. Let $M$ be the kernel of the permutation action; then $S=G/M$ is the primitive permutation group induced by $G$ on $\{V_1,\ldots,V_m\}$. Consider $V_1$ and let $H=\mathbf N_G(V_1)/\mathbf C_G(V_1)$. Then $H$ acts faithfully and irreducibly on $V_1$, and the standard imprimitive embedding identifies $G$ with a subgroup of $H\wr S$ whose projection onto $S$ is $S$. Choose two distinct $H$-orbits $C_1,D_1$ on $V_1$. For each $i$, choose $g_i\in G$ with $V_1^{g_i}=V_i$ and put $C_i=C_1^{g_i}$ and $D_i=D_1^{g_i}$; these sets are independent of the choice of $g_i$. Choose $x_i\in C_i$ and $y_i\in D_i$. If
    \[
        v=x_1+\cdots+x_j+y_{j+1}+\cdots+y_m,
    \]
    then every element of $\mathbf C_G(v)$ preserves the two orbit types and hence its image in $S$ stabilizes both $\{V_1,\ldots,V_j\}$ and $\{V_{j+1},\ldots,V_m\}$.

    Now, $S$ must be a solvable primitive permutation group on $\Omega = \{V_1, \dots, V_m\}$. By induction on $\dim(V)$, either there exists $L \triangleleft H$ with $\fl(L) \leq 4$ such that $H$ has at least 4 orbits of elements in $V_1$ with centralizers in $L$, or $H$ has less than 4 orbits on $V_1$. If $H$ has at least 4 orbits of elements in $V_1$ with centralizers in $L$, let $K = (L \times \cdots \times L) \cap G$. By Lemma 3.4(3), $G$ has at least 4 orbits on $V$ such that $\mathbf{C}_G(x) \leq K$.

    If $H$ has less than 4 orbits on $V_1$. By Lemma 3.4, $G$ has at least 4 orbits on $V$ such that $\mathbf{C}_G(x) \leq (H \times \cdots \times H) \cap G$ when $r(H)=2$ and $m>9$, or when $r(H)=3$ and $m>4$. Then, let $K = (H \times \cdots \times H) \cap G$. This leaves us with a few more cases:
    
    If $r(H) = 2$, $m \leq 9$. Then, $\fl(H) \leq 2$ by Lemma 3.2. Because $S$ must be 2-nilpotent, $\fl(S) \leq 2$, so $\fl(G) \leq \fl (H) + \fl(S) = 4$ by Lemma 2.2. Then, let $K=G$.

    If $r(H) = 3$, $m \leq 4$. Then, $\fl(H) \leq 3$ by Lemma 3.2. If $m=2$, $\fl(S) = 1$, so $\fl(G) \leq \fl (H) + \fl(S) = 4$ by Lemma 2.2.
    
    Assume $m=3$. If $\fl(H)=2$, $\fl(G) \leq \fl (H) + \fl(S) = 4$. Else, $H \leq L \wr Z_2$, where $L \leq \Gamma(q^n)$. We have $G \leq H \wr S_3 \leq (L \wr Z_2)\wr (A_3 \rtimes Z_2)$. Let $J \leq G$. Because $J$ is 2-nilpotent, there exists $N \triangleleft J$ such that $2 \nmid |N|$ and $J/N$ is a 2-group. 
    
    Then, because $N$ has odd order, the image of $N$ onto the $Z_2$ in $S_3$ is trivial, and the image of $N$ onto the $Z_2^3$ in $(L \wr Z_2)^3$ is trivial. Therefore, $N \leq L^3 \rtimes A_3 = L \wr A_3$. We conclude that $\fl(J) \leq \fl(N) + \fl(J/N) \leq \fl(L) + \fl(A_3) + \fl(J/N) \leq 2+1+1=4$.
    
    Notice that there exists no primitive 2-nilpotent permutation groups of degree 4, so we disregard the case $m=4$. Thus, for $r(H) = 3$, let $K=G$.
\end{proof}

\begin{thm}
    If $V$ is a finite completely reducible faithful $G$-module where $2 \nmid |V|$, possibly of mixed characteristic, for a solvable 2-nilpotent group $G$, then there exists $v \in V$ and $K \triangleleft G$ such that $\mathbf{C}_G(v) \leq K$ and $\fl(K) \leq 4$.
\end{thm}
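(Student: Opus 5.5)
We reduce the completely reducible case to the irreducible case treated in Theorem 3.5, one summand at a time, and then glue the pieces together inside $G$. Write
\[
V=V_1\oplus\cdots\oplus V_t,
\]
where each $V_i$ is an irreducible $G$-module, and put $C_i=\mathbf C_G(V_i)$. Each $V_i$ is a faithful irreducible $G/C_i$-module with $2\nmid|V_i|$. Mixed characteristic causes no difficulty, because every summand is treated separately. The group $G/C_i$ is again solvable and 2-nilpotent, since quotients of 2-nilpotent groups are 2-nilpotent. By faithfulness of $V$ we have $\bigcap_i C_i=1$.

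First, apply Theorem 3.5 to each pair $(G/C_i,V_i)$. This gives a normal subgroup $K_i/C_i\trianglelefteq G/C_i$ with $\fl(K_i/C_i)\le 4$, together with a vector $v_i\in V_i$ such that $\mathbf C_{G/C_i}(v_i)\le K_i/C_i$. Then $\mathbf C_G(v_i)\le K_i$ and $K_i\trianglelefteq G$.

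Second, set $v=v_1+\cdots+v_t$ and $K=\bigcap_i K_i$. The summands $V_i$ are $G$-invariant, so an element fixing $v$ fixes every component $v_i$. Hence
\[
\mathbf C_G(v)=\bigcap_i\mathbf C_G(v_i)\le \bigcap_i K_i=K,
\]
and $K\trianglelefteq G$.

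Third, bound the Fitting length of $K$. The natural map $K\to\prod_i K/(K\cap C_i)$ is injective because $\bigcap_i C_i=1$. Each factor $K/(K\cap C_i)\cong KC_i/C_i$ is a subgroup of $K_i/C_i$, so its Fitting length is at most $\fl(K_i/C_i)\le 4$. Fitting length does not increase on passing to subgroups or to direct products. It follows that
\[
\fl(K)\le \max_i \fl(K_i/C_i)\le 4.
\]

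The main point to check is that Theorem 3.5 really supplies a normal subgroup $K_i$ of $G/C_i$ (and not merely some subgroup) with $\fl(K_i/C_i)\le 4$. The rest of the argument depends on this: normality lets the pieces glue into a normal subgroup of $G$, and the bound $\fl(K_i/C_i)\le 4$ is what passes to $K$ through the subdirect embedding. In the quasi-primitive branch of Theorem 3.5 one takes $K=1$ or $K=G$. In the imprimitive branch one takes $K=(L\times\cdots\times L)\cap G$ or $K=G$, which is normal in $G$ because $L\trianglelefteq H$. So the input is available, and the statement follows from Theorem 3.5 by the subdirect product argument above.
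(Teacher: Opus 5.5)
Your proof is correct and is essentially the paper's argument. The paper inducts on $|V|$, splitting $V=V_1\oplus V_2$ into two completely reducible summands and taking $K=\pi^{-1}\bigl((K_1\times K_2)\cap\pi(G)\bigr)$, which is exactly the intersection of the preimages of $K_1$ and $K_2$; your decomposition into all irreducible summands at once, with $K=\bigcap_i K_i$ and the embedding $K\hookrightarrow\prod_i K_i/C_i$, is that induction unrolled.
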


\begin{proof}
    We induct on $|V|$. If $V$ is irreducible, the statement is true by Theorem 3.5. Then, write $V = V_1 \oplus V_2$ for completely reducible modules $V_1$ and $V_2$. Denote $C_i = \mathbf{C}_G(V_i)$. Then, $G$ is isomorphic to a subgroup of $G/C_1 \times G/C_2$. By the inductive hypothesis, there exists $v_i$ and $K_i \triangleleft (G/C_i)$ such that $\mathbf{C}_{G/C_i}(v_i) \leq K_i$ and $\fl(K_i) \leq 4$. Let $\pi:G\longrightarrow G/C_1\times G/C_2$ be the diagonal map and set
    \[
        K=\pi^{-1}\bigl((K_1\times K_2)\cap\pi(G)\bigr).
    \]
    Then $K\triangleleft G$. For $v=v_1+v_2$, we have $\mathbf C_G(v)\leq K$. Since $V$ is faithful, $C_1\cap C_2=1$, so $\pi$ is injective and $K$ embeds in $K_1\times K_2$. Therefore
    \[
        \fl(K)\leq\max\{\fl(K_1),\fl(K_2)\}\leq4.
    \]

\end{proof}

\begin{thm}
    If $V$ is a finite completely reducible faithful $G$-module where $2 \nmid |V|$, possibly of mixed characteristic, for a finite solvable 2-nilpotent group $G$, then there exists $v \in V$ such that $\mathbf{C}_G(v) \leq \mathbf{F}_4(G)$.
\end{thm}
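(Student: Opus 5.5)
The plan is to derive this directly from Theorem 3.6 together with the standard fact that a normal subgroup of Fitting length at most $k$ lies in $\mathbf{F}_k(G)$. By Theorem 3.6 there are $v \in V$ and $K \triangleleft G$ with $\mathbf{C}_G(v) \leq K$ and $\fl(K) \leq 4$. It therefore suffices to show $K \leq \mathbf{F}_4(G)$.

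Since $\fl(K) \leq 4$, we have $K = \mathbf{F}_4(K)$. Lemma 2.4 applies because $K \triangleleft G$, and it gives $\mathbf{F}_4(K) \leq \mathbf{F}_4(G)$. Combining,
\[
\mathbf{C}_G(v) \leq K = \mathbf{F}_4(K) \leq \mathbf{F}_4(G),
\]
which proves the statement.

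The only point needing care is that $K$ really is normal in $G$, not merely normal in some overgroup or quotient. The hypotheses of Lemma 2.4 are used here, and normality is what makes the Fitting series of $K$ sit inside that of $G$. In the irreducible case (Theorem 3.5), $K$ is $1$, $G$, or $(L\times\cdots\times L)\cap G$ with $L \triangleleft H$. The last is normal because it is the intersection of $G$ with a normal subgroup of $H\wr S$ (the base-group product of the normal subgroups $L$). The preimage construction in Theorem 3.6 preserves normality. So normality of $K$ holds as stated, and I expect no real obstacle: the proof is a short deduction once Theorem 3.6 and Lemma 2.4 are in hand.
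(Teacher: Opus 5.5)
Your proposal is correct and follows the paper's argument exactly: Theorem 3.6 supplies $v$ and $K \triangleleft G$ with $\mathbf{C}_G(v)\leq K$ and $\fl(K)\leq 4$, and Lemma 2.4 then gives $K=\mathbf{F}_4(K)\leq \mathbf{F}_4(G)$. Your extra check that $K$ is normal in $G$ — via $L^m \triangleleft H\wr S$ in Theorem 3.5 and the preimage construction in Theorem 3.6 — is accurate, and it makes explicit what the paper takes for granted.
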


\begin{proof}
    By Theorem 3.6, there exists $v \in V$ and $K \triangleleft G$ such that $\mathbf{C}_G(v) \leq K$ and $\fl(K) \leq 4$. By Lemma 2.4, because $K \triangleleft G$ and $\fl(K) \leq 4$, $\mathbf{C}_G(v) \leq K \leq \mathbf{F}_4(G)$.
\end{proof}

\section{3-Nilpotent Group Actions}

\begin{lemma}
    Suppose that $G$ is a solvable 3-nilpotent irreducible subgroup of GL$(n,q)$ for a prime power $q$ where $3 \nmid q$.

    \begin{enumerate}[ {(}1{)} ]
        \item If $(n,q) = (2,q)$, then $\fl(G) \leq 2$.
        \item If $(n,q) = (3,4)$, then $\fl(G) \leq 3$.
        \item If $(n,q) = (4,2)$, then $\fl(G) \leq 2$.
        \item If $(n,q) = (6,2)$, then $\fl(G) \leq 3$.
        \item If $(n,q) = (8,2)$, then $\fl(G) \leq 3$.
    \end{enumerate}
\end{lemma}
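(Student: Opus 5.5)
The plan mirrors the proof of Lemma 3.1. Case (1) is handled theoretically and cases (2)--(5) by a finite search. Throughout, let $V$ be the natural irreducible $G$-module with $|V|=q^n$, and write $N$ for the normal $3'$-subgroup with $G/N$ a $3$-group.

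For $n=2$ we again invoke Theorem 2.11 of \cite{Manz/Wolf}, which gives three alternatives.
\begin{enumerate}[ {(}a{)} ]
    \item $G\leq Z_{q-1}\wr Z_2$. Here $\fl(G)\leq 1+1=2$ by Lemma 2.2.
    \item $G\leq\Gamma(q^2)$. Here $\fl(G)\leq 2$ directly.
    \item $\mathbf F(G)=QT$ with $Q\cong Q_8$ and $G/\mathbf F(G)\cong Z_3$ or $S_3$.
\end{enumerate}
Case (c) needs an argument rather than a dismissal, because it is no longer automatically excluded as it was in the $2$-nilpotent setting. If $G$ is $3$-nilpotent, then a Sylow $3$-subgroup $P$ maps onto the $Z_3$ on top. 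Also $Q$ is a characteristic subgroup of $\mathbf F(G)$ (it is the Sylow $2$-part modulo central terms), and $Q\leq N$ since $Q$ is a $2$-group. Hence $[Q,P]\leq N\cap QP$. On the other hand, the $Z_3$ acts nontrivially on $Q/\mathbf Z(Q)$, so $[Q,P]$ covers $Q/\mathbf Z(Q)$. A $3$-element acting on the normal $3'$-group $N$ with $G=NP$ gives no contradiction by itself. Instead, observe that $P$ centralizes $N/\!\ldots$; more precisely, $3$-nilpotence means $PN/N$ is the full $3$-part and $G/N$ is a $3$-group. Then the image of $G$ in $\mathrm{Out}$ is consistent, so I expect case (c) can actually occur, with $\fl(G)=3$.

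That conclusion would contradict part (1), so I need to recheck it. In case (c), $G/\mathbf F(G)\cong Z_3$ acts on $\mathbf F(G)/T\cong Z_2^2$ irreducibly. The subgroup $N$ contains $Q$, so $NP=G$ with $P$ acting nontrivially on $N$; this does not violate $3$-nilpotence. The way out is to use $3\nmid q$ together with irreducibility: I would show that in case (c) the Sylow $3$-subgroup cannot be normal modulo a $3'$-group, because its image in $G/T$ is not normal. Since $T$ is central, $G/T\leq PGL(2,q)$ has the form $A_4$ or $S_4$. In $A_4$ there is no normal subgroup of index $3$ that is a $3'$-group containing a complement. But $V_4\triangleleft A_4$ is a $3'$-group of index $3$, so $A_4$ is $3$-nilpotent. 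This is a genuine obstacle, and I would settle it by examining $\fl(G)$ via $G=NP$ directly. Since $N=QT$ (or $N$ is contained in $Q T$ extended by $2$-parts) is nilpotent and $G/N$ is a $3$-group, we get $\fl(G)\leq 2$. That gives the bound in case (c) as well.

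The general principle behind this is useful for all parts: if $N$ is nilpotent, then $\fl(G)\leq\fl(N)+1=2$. So the argument reduces to bounding $\fl(N)$, where $N$ is a $3'$-normal subgroup.

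For cases (2)--(5), namely $(3,4)$, $(4,2)$, $(6,2)$ and $(8,2)$, I will follow Lemma 3.1 and enumerate with GAP. The procedure is as follows.
\begin{enumerate}[ {(}i{)} ]
    \item List the conjugacy classes of irreducible solvable subgroups of $GL(n,q)$, or of the maximal solvable ones.
    \item Filter for $3$-nilpotence by checking whether $O^{3}$-type subgroups of the form $O_{3'}(G)$ have $3$-power index.
    \item Compute the Fitting length of each surviving group.
\end{enumerate}
The main obstacle is $(8,2)$. There $GL(8,2)$ is large, so it is better to use the primitive/imprimitive split from Lemma 2.3. In the imprimitive subcase, $G\leq H\wr S$, and Lemma 2.2 applies with $H\leq GL(4,2)$ or $GL(2,2)$ and with $S$ a $3$-nilpotent transitive group of degree $2$ or $4$. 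In the primitive subcase, we have $e\in\{1,2,4,8\}$. The same argument as in the proof of Theorem 3.3 shows that $A/F$ is controlled by $3$-nilpotent subgroups of $\Sp(2k,2)$-type groups, and the remaining small possibilities are checked computationally.
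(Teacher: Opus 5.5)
Your part (1) follows the paper, but in case (c) you never state the fact that settles it. $G/\mathbf F(G)$ is a quotient of a 3-nilpotent group, hence 3-nilpotent, while $S_3$ is not; so $G/\mathbf F(G)\cong Z_3$ and $\fl(G)\le 2$. Equivalently, $O_{3'}(G)\mathbf F(G)/\mathbf F(G)$ is a normal $3'$-subgroup of $Z_3$ or $S_3$, hence trivial, so $O_{3'}(G)\le\mathbf F(G)$ is nilpotent. Your principle $\fl(G)\le\fl(O_{3'}(G))+1$ then gives $2$.

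As written, you assert nilpotence of ``$QT$ extended by 2-parts'', which is not nilpotent in general. Your intermediate expectation that (c) occurs with $\fl(G)=3$ is also false: the 3-nilpotent groups in (c) are of $SL(2,3)$-type and have Fitting length 2.

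For (2)--(4) you take a different route from the paper. The paper reads the possibilities off Theorems 2.12 and 2.15 of Manz--Wolf and bounds each one by Lemma 2.2 or by nilpotence of $G\cap B$ for the relevant base group $B$. You replace this with a machine search. That is legitimate, trading structural insight for certainty, provided you enumerate \emph{all} irreducible solvable subgroups (e.g.\ from the IRREDSOL library). Your alternative of listing only the maximal solvable subgroups and filtering them for 3-nilpotence does not work. 3-nilpotence does not pass to overgroups, so you would miss 3-nilpotent subgroups of maximal solvable groups that are not themselves 3-nilpotent.

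Your $(8,2)$ plan contains genuine errors.

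In the primitive case, Lemma 2.3(3) requires $\gcd(e,q)=1$. With $q=2$ and $e\mid 8$ this forces $e=1$, so $G\le\Gamma(2^8)$ and $\fl(G)\le 2$ by Lemma 2.3(8); this is the paper's argument. Your list $e\in\{1,2,4,8\}$ is wrong. Moreover, the ``same argument as in Theorem 3.3'' would only give $\fl(G)\le 1+\fl(A/F)+1$, with $A/F$ a 3-nilpotent section of a symplectic group over $\mathbb{F}_2$. That does not yield $3$ in general. The actual point is that the cases $e=2,4,8$ do not exist.

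In the imprimitive case, you omit $m=8$. It is vacuous: $H\le GL(1,2)=1$ makes $V$ a permutation module, which fixes the sum of the basis vectors and so is reducible. The paper instead bounds it by $\fl(A\Gamma L(1,2^3))=3$. Either way, you must address it.

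For $m=4$, Lemma 2.2 gives $3$ only after you observe that $H$ is a 3-nilpotent irreducible subgroup of $GL(2,2)\cong S_3$, hence $H\cong Z_3$ with $\fl(H)=1$. The only 3-nilpotent primitive group of degree $4$ is $A_4$, with $\fl(A_4)=2$. So the bound $\fl(H)\le 2$ from irreducibility alone would give only $4$.
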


\begin{proof}
    Let $V$ an irreducible $G$-module with $|V|=q^n$. We use a pair $(n,q)$ to represent the different cases. 
    
    Consider when $(n,q) = (2,q)$. By Theorem 2.11 of \cite{Manz/Wolf}, one of the following cases must occur:
    \begin{enumerate}[ {(}a{)} ]
        \item $G \leq Z_{q-1} \wr Z_2$;
        \item $G \leq \Gamma(q^2)$; or
        \item $\mathbf{F}(G) = QT$ where $Q_8 \cong Q \trianglelefteq G, T = \mathbf{Z}(\mathbf{F}(G)) = \mathbf{Z}(G)$ is cyclic, $T \leq \mathbf{Z}(GL(2,q)), T \cap Q = \mathbf{Z}(Q)$ and $G/\mathbf{F}(G) \cong Z_3$ or $S_3$. Also $q \neq 2$.
    \end{enumerate}
    
    We now analyze each case. In (a), $\fl(G) \leq 1+1 = 2$. In (b), $\fl(G) \leq 2$. In (c), $G/\mathbf{F}(G)$ cannot be $S_3$, as $S_3$ is not 3-nilpotent. Therefore, $\fl(G) \leq \fl(\mathbf{F}(G)) + \fl(Z_3) = 1+1 = 2$.

    Consider when $(n,q) = (3,4)$. By Theorem 2.12 of \cite{Manz/Wolf}, one of the following cases must occur:
    \begin{enumerate}[ {(}a{)} ]
        \item $G \leq Z_{3} \wr S$ where $S \leq S_3$;
        \item $G \leq \Gamma(4^3)$; or
        \item $G/\mathbf{F}(G) \leq SL(2,3)$.
    \end{enumerate}
    
    We now analyze each case. In (a), $\fl(G) \leq 1+2 = 3$. In (b), $\fl(G) \leq 2$. In (c), $\fl(G) \leq \fl(\mathbf{F}(G)) + \fl(G/\mathbf{F}(G)) = 1+2 = 3$.

    Consider when $(n,q) = (4,2)$. By Theorem 2.15 of \cite{Manz/Wolf}, one of the following cases must occur:
    \begin{enumerate}[ {(}a{)} ]
        \item $G \leq S_3 \wr Z_2$; or
        \item $G \leq \Gamma(2^4)$;
    \end{enumerate}
    
    We now analyze each case. In (a), let $B=S_3\times S_3$ be the base group. Then $G\cap B\triangleleft G$ is 3-nilpotent, and each coordinate projection of $G\cap B$ is a 3-nilpotent subgroup of $S_3$, hence nilpotent. Thus $G\cap B$ is nilpotent. Since $G/(G\cap B)$ is a subgroup of $Z_2$, we obtain $\fl(G)\leq2$. In (b), $\fl(G) \leq 2$.

    Consider when $(n,q) = (6,2)$. By Theorem 2.15 of \cite{Manz/Wolf}, one of the following cases must occur:
    \begin{enumerate}[ {(}a{)} ]
        \item $G \leq \Gamma(2^3) \wr Z_2$;
        \item $G \leq S_3 \wr Z_3 \cdot Z_2$;
        \item $G \leq \Gamma(2^6)$; or
        \item $G/\mathbf{F}(G) \leq$ GL$(2,3)$.
    \end{enumerate}
    
    We now analyze each case. In (a), $\fl(G) \leq \fl(\Gamma(2^3))+\fl(Z_2)=3$. In (b), let $B$ be the base group. The coordinate projections of the 3-nilpotent group $G\cap B$ are 3-nilpotent subgroups of $S_3$, hence nilpotent; therefore $G\cap B$ is nilpotent. The quotient $G/(G\cap B)$ has Fitting length at most 2, so $\fl(G)\leq3$. In (c), $\fl(G) \leq 2$. In (d), $\fl(G) \leq 2 + 1 = 3$.

    Consider when $(n,q) = (8,2)$. 

    If $V$ is a faithful primitive $G$-module for the solvable group $G$, then $V$ is quasi-primitive. Applying Lemma 2.3, we have that $e \mid 8$ and $(e,q)=1$, so $e=1$. Then, $\fl(G) \leq 2$.

    Suppose $V$ is not primitive. Then, $G$ is isomorphic to a subgroup of $H \wr S$, where $S$ is a primitive permutation group of degree $m$ for an integer $m > 1$ with $mt = n$ and $H$ is a solvable irreducible subgroup of GL$(t,q)$. By Lemma 2.1, $m$ is a prime power. Then, $G$ is isomorphic to a subgroup of one of the following:

    \begin{enumerate}[ {(}1{)} ]
        \item $H \wr S_2$, where $H$ is a solvable irreducible subgroup of GL$(4,2)$. By the case $(n,q)=(4,2)$ above, $\fl(H) \leq 2$, so $\fl(G) \leq \fl (H) + \fl(S_2) = 3$.
        \item $H \wr S$, where $H$ is a solvable irreducible subgroup of GL$(2,2)$ and $S$ is a primitive permutation group of degree 4. Notice that $H$ must be $Z_2$ or $Z_3$. The induced permutation group $S$ is a 3-nilpotent subgroup of $S_4$, so $\fl(S)\leq2$. Hence $\fl(G)\leq\fl(H)+\fl(S)\leq3$.
        \item $H \wr S$, where $H$ is a solvable irreducible subgroup of GL$(1,2)$ and $S$ is a primitive permutation group of degree 8. If $\fl(S) \leq 2$, $\fl(G) \leq 3$. Notice that $\fl(S)>2$ only when $S=A\Gamma(2^3)$, where $\fl(S)=3$. Since $\operatorname{GL}(1,2)=1$, we have $H=1$, and therefore $\fl(G)\leq\fl(S)=3$.
    \end{enumerate}
\end{proof}

\begin{lemma}
    Suppose that $G$ is a 3-nilpotent completely reducible solvable subgroup of GL$(V)$ for a vector space $V \ne \{0\}$ where $3 \nmid q$. Let $r$ be the number of orbits of $G$ on $V$, so that $r>1$. If $r \leq 3$ then $\fl(G) \leq r$.
\end{lemma}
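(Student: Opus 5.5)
I will follow the proof of Lemma 3.2 closely, with Lemma 4.1 replacing Lemma 3.1 as the input on small cases.

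\emph{Reduction to the irreducible case.} If $V$ is not irreducible, write $V=V_1\oplus V_2$ with both summands nonzero. Then the vectors $0$, $v_1$, $v_2$ and $v_1+v_2$ (with $v_i \in V_i$ nonzero) lie in pairwise distinct $G$-orbits, so $r\geq 4$. Hence we may assume $G$ is irreducible with $|V|=q^n$. If $G\leq\Gamma(q^n)$, then $\fl(G)\leq 2\leq r$, so we may also assume $G$ is not semilinear.

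\emph{The case $r=2$.} Here $G$ is transitive on $V^\#$. By Theorem 6.8 of \cite{Manz/Wolf}, $G$ is either semilinear or one of finitely many exceptional groups. These exceptions occur in dimension 2 over $q\in\{3,5,7,11,23\}$, or as $G\leq\Gamma$-extensions of $SL(2,3)$-type groups in $(4,3)$. Each has $Q_8$ normal with a $3$-element acting nontrivially on $Q_8/\mathbf Z(Q_8)$, and $(4,3)$ is excluded anyway since $3\nmid q$. For the dimension-2 exceptions, Lemma 4.1(1) already gives $\fl(G)\leq 2$. So in every case $\fl(G)\leq 2$. Note that this argument does not even need to exclude the exceptions, only to bound their Fitting length.

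\emph{The case $r=3$, imprimitive.} Suppose $G\leq H\wr S$ with $S$ primitive of degree $m\geq 2$ and $s=r(H)\geq 2$. By Lemma 2.6 of \cite{Foulser}, $r\geq\binom{m+s-1}{m}$, which forces $s=2$ and $m=2$. Then $H$ is transitive on nonzero vectors, so $\fl(H)\leq 2$ by the previous paragraph. Lemma 2.2 then gives $\fl(G)\leq\fl(H)+\fl(Z_2)\leq 3$.

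\emph{The case $r=3$, primitive.} By Theorem 1.1 of \cite{Foulser}, $(n,q)$ is one of $(3,4)$, $(2,p^{2k})$, $(4,3)$, or $(4,7)$. The case $(4,3)$ is excluded because $3\nmid q$. Lemma 4.1(2) gives $\fl(G)\leq 3$ for $(3,4)$, and Lemma 4.1(1) gives $\fl(G)\leq 2$ for $(2,p^{2k})$. For $(4,7)$, the relevant rank-3 primitive group has $G_0$ with orbits of lengths $480$ and $1920$ (or $1980$, as in Lemma 3.2), and the $3$-nilpotent subgroups of it must be checked.

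\emph{Main obstacle.} The $(4,7)$ case is the step I expect to be hardest. My first approach is to invoke Lemma 2.3: if $e=4$, then $E$ is a $2$-group and the $3$-nilpotence argument from Theorem 3.3 does not directly kill the action. However, $A/F\leq\Sp(4,2)$ restricted to $3$-nilpotent subgroups can be bounded, giving $\fl(A/F)\leq 2$. That would only yield $\fl(G)\leq 4$, so to reach $\fl(G)\leq 3$ I would instead fall back on the GAP computation (Appendix) enumerating the $3$-nilpotent subgroups with exactly $3$ orbits, as was done for $p=2$.
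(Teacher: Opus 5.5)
Your proposal is correct and follows the paper's proof essentially step for step: reduction to the irreducible case, Theorem 6.8 of Manz--Wolf for $r=2$, Foulser's bound forcing $s=m=2$ in the imprimitive case, and Foulser's list together with Lemma 4.1 and the GAP check in the primitive case. Your use of Lemma 4.1(1) for the two-dimensional exceptions when $r=2$ is a slightly cleaner justification than the paper's, and you are right that $2400-480=1920$, so ``1980'' is a typo.

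Incidentally, the structural route you abandoned for $(4,7)$ already suffices without GAP, because you overlooked Lemma 2.3(6). There $e\in\{1,2,4\}$. For $e=4=n$, Lemma 2.3(6) gives $G=A$, so your bound $\fl(A/F)\le 2$ yields $\fl(G)\le 1+2=3$. For $e=2$, $A/F$ is a $3$-nilpotent subgroup of $\Sp(2,2)\cong S_3$, hence nilpotent, again giving $\fl(G)\le 3$.
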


\begin{proof}
     If $G$ were not irreducible, $r \geq 2^2 = 4$. Therefore, let $G$ is a 3-nilpotent irreducible subgroup of GL$(n,q)$ for a prime power $q$ where $|V| = q^n$ and $3 \nmid q$. Since $V \ne \{0\}$, $r>1$.
    
    If $G$ is a semilinear group, then $\fl(G) \leq 2$.
    
    When $r=2$, $G$ acts transitively on the non-zero vectors of $V$. By Theorem 6.8 of \cite{Manz/Wolf}, one of few exceptional cases occur. However, the only 3-nilpotent cases where $q \ne 3$ have $G/\mathbf{F}(G)=Z_3$, so $\fl(G) = 2$.
    
    Now we assume $r=3$.
    
    If $V$ is imprimitive, then $G$ is isomorphic to a subgroup of $H \wr S$. If we let $s=r(H)$ and $S$ be a permutation group on $m$ letters, then $r(G) \geq {m+s-1\choose m}$ by Lemma 2.6 of \cite{Foulser}. If $s \geq 3$ or if $m \geq 3$, then $G$ has at least 4 orbits on $V$. We thus assume $s=2$ and $m=2$. Since $s=2$, then $\fl(H) \leq 2$ by the previous paragraph. We then have $\fl(G) \leq \fl(H) + \fl(Z_2) \leq 3$.
    
    If $V$ is primitive, by Theorem 1.1 of \cite{Foulser}, $G$ must satisfy one of 4 cases:
    \begin{enumerate}[ {(}a{)} ]
        \item $(n,q)=(3,4)$; by Lemma 4.1, $\fl(G) \leq 3$.
        \item $(n,q)=(2,p^{2k})$; by Lemma 4.1, $\fl(G) \leq 2$.
        \item $(n,q)=(4,3)$; because $3 \nmid q$, we disregard this case.
        \item $(n,q)=(4,7)$ with $G_0$ containing orbits of length 480 and 1980; $\fl(G) \leq 3$ by our GAP code (see Appendix).
    \end{enumerate}
    Therefore, $\fl(G) \leq 3$.

    % Now, suppose $G$ is not irreducible. We induct on $\dim(V)$. Write $V = V_1 \oplus V_2$ for completely reducible modules $V_1$ and $V_2$. Denote $C_i = \mathbf{C}_G(V_i)$. Then, $G$ is isomorphic to a subgroup of $G/C_1 \times G/C_2$. By the inductive hypothesis, $\fl(G/C_1) \leq 3$ and $\fl(G/C_2) \leq 3$, so $\fl(G) \leq 3$.
\end{proof}

\begin{thm}
    Let $V$ be a faithful irreducible quasi-primitive $G$-module with $3 \nmid |V|$ for a finite solvable 3-nilpotent group $G$, then $G$ has at least 1 regular orbit on $V$ or $\fl(G) \leq 5$.
\end{thm}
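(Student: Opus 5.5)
The plan mirrors the proof of Theorem 3.3: use the structure of Lemma 2.3 together with the regular-orbit results of \cite{HoltYang}, and bound $\fl(A/F)$ through the action on the modules $W_i$.

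First, \cite{HoltYang} gives a regular orbit of $G$ on $V$ when $e=5,6,7$, or $e\geq 10$ with $e\neq 16$. If $e=1$, Lemma 2.3(8) gives $\fl(G)\leq 2$. This leaves $e\in\{2,3,4,8,9,16\}$, and since $\gcd(q,e)=1$ each $e_i$ is a power of $2$ or of $3$.

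Next, I bound $\fl(G/\mathbf C_G(W_i))$ for each $i$.
\begin{itemize}
\item If $e_i$ is a power of $3$, then $E_i$ is a $3$-group. Let $N$ be the normal Hall $3'$-subgroup of $G$. Then $[E_i,N]\leq E_i\cap N=1$, so the image of $G$ on $W_i$ (a module over $\GF(3)$) is a $3$-group. Since $W_i$ is irreducible, this image is trivial.
\item If $e_i=2$, then $W_i$ is $2$-dimensional over $\GF(2)$. Lemma 4.1(1) gives $\fl\le 2$ (the image is even a subgroup of $\operatorname{GL}(2,2)\cong S_3$ that is $3$-nilpotent).
\item If $e_i=4$, Lemma 4.1(3) with $(n,q)=(4,2)$ gives $\fl\le 2$.
\item If $e_i=8$, Lemma 4.1(4) with $(6,2)$ gives $\fl\le 3$.
\item If $e_i=16$, Lemma 4.1(5) with $(8,2)$ gives $\fl\le 3$.
\end{itemize}
Lemma 4.1 assumes irreducibility, and the statement is for irreducible $W_i$, which holds by Lemma 2.3(2). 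We also need the quotient $G/\mathbf C_G(W_i)$ to be $3$-nilpotent and solvable, which holds because these properties pass to quotients.

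Finally, as in Theorem 3.3, the diagonal embedding $G/\mathbf C_G(F/T)\hookrightarrow\prod_i G/\mathbf C_G(W_i)$ and the faithfulness of $A/F$ on $F/T$ (Lemma 2.3(4)) give $\fl(A/F)\leq 3$. Since $G/A$ is abelian by Lemma 2.3(6), we get
\[
\fl(G)\leq \fl(F)+\fl(A/F)+\fl(G/A)\leq 1+3+1=5 .
\]

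The main obstacle is the cases where some $e_i\in\{8,16\}$, since these are exactly what push the bound to $5$ rather than $4$. There we rely entirely on Lemma 4.1(4),(5). If one wants to be careful about the symplectic constraint from Lemma 2.3(5), note that $A/\mathbf C_A(E_i/Z)$ embeds in $\Sp(2n_i,2)$. This only strengthens the hypothesis, so Lemma 4.1 still applies to the irreducible action.
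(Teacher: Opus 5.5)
Your proof is correct and follows the paper's own proof of this theorem essentially verbatim. The shared steps are: Holt--Yang to reduce to $e\in\{2,3,4,8,9,16\}$; the normal Hall $3'$-subgroup to make the action on each $W_i$ trivial when $e$ is a power of $3$; Lemma 4.1 for the $2$-power cases; and the diagonal embedding to get $\fl(G)\leq 1+3+1=5$. Your explicit matching of $e_i=2,4,8,16$ to items (1), (3), (4), (5) of Lemma 4.1 spells out what the paper leaves implicit; note that each $e_i$ is a $2$- or $3$-power because $e_i\mid e$, not because $\gcd(q,e)=1$, though this slip is harmless.
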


\begin{proof}
    First, by \cite{HoltYang}, if $e = 5,6,7,$ or $e \geq 10$ and $e \ne 16$, $G$ has at least one regular orbit $v$ on $V$. If $e=1$, $\fl(G) \leq 2$, so we only need to consider $e=2,3,4,8,9,16$.

    If $e=3^n$, then each $E_i$ is a 3-group. Because $G$ is 3-nilpotent, there exists a normal Hall $3'$-subgroup $N$ of $G$. Then $[E_i,N]\leq E_i\cap N=1$, so the image of $G$ on $W_i$ is a 3-group. A finite 3-group acting on a nonzero vector space over $\mathds F_3$ has a nonzero fixed vector; since $W_i$ is irreducible, the action is trivial. Thus $G/\mathbf C_G(W_i)=1$. Otherwise $e=2,4,8,$ or $16$, and Lemma 4.1 gives
    \[
        \fl\bigl(G/\mathbf C_G(W_i)\bigr)\leq 3.
    \]
    Since $F/T=\bigoplus_i W_i$ and $\mathbf C_G(F/T)=\bigcap_i\mathbf C_G(W_i)$, the diagonal map gives
    \[
        G/\mathbf C_G(F/T)\hookrightarrow \prod_i G/\mathbf C_G(W_i).
    \]
    As $A/F$ acts faithfully on $F/T$, it follows that
    \[
        \fl(A/F)\leq \fl\bigl(G/\mathbf C_G(F/T)\bigr)
        \leq \max_i \fl\bigl(G/\mathbf C_G(W_i)\bigr)\leq 3.
    \]
    Hence $\fl(G)\leq \fl(F)+\fl(A/F)+\fl(G/A)\leq 1+3+1=5$.
\end{proof}

\begin{thm}
    Suppose that $H$ is a linear group on $V_1$ and $S$ is a solvable primitive permutation group on $\Omega$, with $|\Omega|=m$. Then $V = V_1 + V_2 + \cdot \cdot \cdot + V_m$ is an $H \wr S$-module where $V_i$ are copies of $V_1$. Let $G$ be a 3-nilpotent subgroup of $H \wr S$ whose projection onto $S$ is $S$.

    \begin{enumerate}
        \item Suppose $H$ has at least 2 orbits on $V_1$ such that $C_H(v_1) \leq L, L \triangleleft H$. If also $m > 9$ then $G$ has at least 4 orbits on $V$ such that $\mathbf{C}_G(v) \leq (L \times L \times \cdots \times L) \cap G$.
        \item Suppose $H$ has at least 3 orbits on $V_1$ such that $C_H(v_1) \leq L, L \triangleleft H$. If also $m \neq 2,4$ then $G$ has at least 4 orbits on $V$ such that $\mathbf{C}_G(v) \leq (L \times L \times \cdots \times L) \cap G$.
        \item Suppose $H$ has at least 4 orbits on $V_1$ such that $C_H(v_1) \leq L, L \triangleleft H$. Then $G$ has at least 4 orbits on $V$ such that $\mathbf{C}_G(v) \leq (L \times L \times \cdots \times L) \cap G$.
    \end{enumerate}
\end{thm}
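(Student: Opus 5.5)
The proof will follow Theorem 3.4, with 2-nilpotency replaced by 3-nilpotency. Let $s$ be the number of $H$-orbits on $V_1$ whose vectors have centralizer in $L$. As before, call a partition $\Omega=\Delta_1\cup\cdots\cup\Delta_j$ with $2\le j\le s$ \emph{trivially centralized} if $\bigcap_i \stab_S(\Delta_i)=1$. Given such a partition, put a vector from the $i$-th good $H$-orbit on every component in $\Delta_i$, and call the resulting vector $v$. Any $g\in \mathbf C_G(v)$ permutes the components while preserving orbit types, so its image in $S$ stabilizes every $\Delta_i$ and is therefore trivial. Hence $g$ lies in the base group, and on each coordinate it centralizes a good vector. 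This gives $\mathbf C_G(v)\le (L\times\cdots\times L)\cap G$. It then suffices to count $G$-orbits of such vectors; since $G\le H\wr S$, it is enough to count $H\wr S$-orbits, and these are bounded below by the number of $S$-orbits of labelled trivially centralized partitions.

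\emph{Step 1: $S$ has a regular orbit on the power set $\mathfrak B(\Omega)$.} Take $\Delta_1$ with $\stab_S(\Delta_1)=1$ and let $\Delta_2=\Omega\setminus\Delta_1$. Labelling the two parts by distinct good orbits gives at least $s(s-1)$ good orbits when $m$ is odd, and at least $s(s-1)/2$ when $m$ is even.
\begin{itemize}
\item If $s\ge4$, this is already at least $6>4$.
\item If $s=3$ and $m$ is odd, we get at least $6$.
\item If $s=3$ and $m$ is even with $m\ne2,4$, so $m\ge 6$, I will quote Yang's counting from \cite{Yang}, exactly as in Theorem 3.4. Alternatively, I will use two non-equivalent regular subsets of different sizes, or a three-part partition, to reach $4$.
\item If $s=2$ and $m>9$, I again cite \cite{Yang} for at least $4$ $S$-orbits of regular subsets, counted up to complementation.
\end{itemize}

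\emph{Step 2: $S$ has no regular orbit on $\mathfrak B(\Omega)$.} Theorem 5.6 of \cite{Manz/Wolf} gives a finite list of exceptions of degrees $3,4,5,7,8,9$. Since $G$ is 3-nilpotent and maps onto $S$, $S$ is 3-nilpotent, which removes several entries. For example, $S_3$, $A_4$, $S_4$, $F_{21}$, $F_{42}$, $A\Gamma(2^3)$ and the degree-$9$ groups containing elements of order $3$ acting nontrivially on the $3$-part all fail to be 3-nilpotent. I expect the survivors to be $F_{10}$ and $F_{20}$ (degree 5), possibly $Z_7\rtimes Z_2$ (degree 7), and possibly some degree-$8$ affine groups $Z_2^3\rtimes Z_7$. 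For each survivor I will rerun the GAP computation of $N(\pi)$, restricted to trivially centralized partitions. I will then evaluate
\[
\sum_{\pi} N(\pi)\frac{P(s,B(\pi))}{F(\pi)}
\]
for $s=2,3$ and check that it is at least $4$. For $s=4$, a three-block partition already gives enough orbits, so case (3) holds with no restriction on $m$.

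The excluded degrees $m=2,4$ in (2) match Step 1: for $m=2$ the only partition has one orbit per labelled pair, and no primitive 3-nilpotent group of degree $4$ has regular subsets in sufficient number. The main obstacle is Step 2. I have to pin down exactly which exceptional primitive groups are 3-nilpotent, and then verify the tables numerically, especially for $s=2$, where the counts are tight. If some small group such as $Z_2^3\rtimes Z_7$ falls short for $s=2$, then $m=8\le 9$ is excluded by hypothesis (1) anyway, so I will only need the $s=3$ count there.
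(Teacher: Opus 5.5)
Your framework is the paper's: trivially centralized partitions, Yang's counting in Step 1 when $S$ has a regular orbit on $\mathfrak{B}(\Omega)$, and the exceptional list of Theorem 5.6 of Manz--Wolf in Step 2. The gap is in Step 2, where your 3-nilpotency filter is wrong.

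The groups $A_4$, $F_{21}$, $F_{42}$ and $A\Gamma(2^3)$ are all 3-nilpotent. Their normal 3-complements are $V_4$, $Z_7$, $Z_7\rtimes Z_2$ (normal because $F_{42}/Z_7$ is abelian) and $AGL(1,2^3)$. A general criterion: if the minimal normal subgroup $M$ of the primitive group $S$ is a $3'$-group, then $S$ is 3-nilpotent if and only if $S/M\cong S_\alpha$ is. Here the point stabilizers are $Z_3$, $Z_3$, $Z_6$ and $Z_7\rtimes Z_3$, all 3-nilpotent.

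Conversely, your tentative survivors $D_{14}=Z_7\rtimes Z_2$ and $Z_2^3\rtimes Z_7$ are not exceptions at all:
\begin{itemize}
\item The subset $\{0,1,3\}$ of $\mathbb{Z}/7$ has trivial stabilizer in $D_{14}$.
\item $AGL(1,2^3)$ acts regularly on the $56=\binom{8}{3}$ three-subsets, since $3\nmid 56$ and its point stabilizers have odd order.
\end{itemize}
Meanwhile $A_4$, $F_{42}$ and $A\Gamma(2^3)$ certainly have no regular orbit on $\mathfrak{B}(\Omega)$: their orders $12$, $42$, $168$ exceed every $\binom{m}{k}$.

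So the list you must treat is $A_4$, $F_{10}$, $F_{20}$, $F_{42}$, $A\Gamma(2^3)$, which is exactly the paper's list. As planned, your verification never checks:
\begin{itemize}
\item $F_{42}$ ($m=7$) or $A\Gamma(2^3)$ ($m=8$) with $s=3$, needed for part (2);
\item $A_4$, $F_{42}$ or $A\Gamma(2^3)$ with $s\ge 4$, needed for part (3).
\end{itemize}
These are precisely the cases the paper settles with its tables.

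The same misclassification breaks your explanation of the hypothesis $m\neq 4$ in (2). If $A_4$ and $S_4$ were both non-3-nilpotent, no 3-nilpotent primitive group of degree $4$ would exist, and the exclusion would be vacuous. The actual reason is $A_4$. For $s=3$, its only trivially centralized partitions with at most three parts have type $(1,1,2)$. These give $P(3,3)/2=3<4$ good orbits, whereas for $s=4$ the count is $12+2=14$.

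With the correct list, the argument closes as in the paper. For $s=3$, the three-part partitions for $F_{42}$ and $A\Gamma(2^3)$ already yield well over $4$ good orbits. For $s\ge 4$, your three-part-partition remark works for all five groups.
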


\begin{proof}
    $G$ is a subgroup of $H \wr S$, its projection onto $S$ is $S$, and $S$ is a solvable primitive permutation group on $\Omega$. Let $H$ have at least $s$ orbits of elements in $V_1$ whose centralizer in $H$ is in $L$ where $L \triangleleft H.$
    
    We call a partition "trivially centralized" when $\Omega$ can be partitioned into a disjoint union of non-empty subsets $\Delta_1,...\Delta_j$ and $2 \leq j \leq s$ such that $\stab_S(\Delta_1) \cap \cdots \cap \stab_S(\Delta_j) = 1$. Note that when a partition is trivially centralized, $\mathbf{C}_G(v) \leq (L \times L \times \cdots \times L) \cap G$. We refer to the number of orbits of $G$ on $V$ where this occurs as "good orbits."

    Suppose $S$ has a regular orbit on $\mathfrak{B}(S)$, the power set of $\Omega$. Then, there exists some $\Delta_1 \subset \Omega$ such that $\stab_S(\Delta_1) = 1$. Let $\Delta_2 = \Omega \setminus \Delta_1$. This partition is trivially centralized. If $m$ is odd, we have $s$ choices for the orbit on $\Delta_1$ and $s-1$ choices for the orbit on $\Delta_2$, so $G$ has at least $s(s-1)$ good orbits. If $m$ is even, $\Delta_1$ and $\Delta_2$ may have the same number of elements; then, $G$ has at least $\frac{s(s-1)}{2}$ good orbits. Therefore, we have:

    \begin{enumerate}
        \item If $s \geq 4$, $G$ has at least $\frac{4 \cdot 3}{2} > 4$ good orbits.
        \item If $s=3$ and $m > 4$, $G$ has at least $4$ good orbits by \cite{Yang}. Additionally, if $m=3$, $G$ has at least $3 \cdot 2 > 4$ good orbits. Therefore, if $s=3$ and $m\ne 2,4$, $G$ has at least 4 good orbits.
        \item If $s=2$ and $m>9$, $G$ has at least $4$ good orbits by \cite{Yang}.
    \end{enumerate}

    Next, assume that $S$ does not have a regular orbit on $\mathfrak{B}(S)$. By Theorem 5.6 of \cite{Manz/Wolf}, we only have a few additional cases. Because $S$ must be 3-nilpotent, $S$ must be isomorphic to $A_4, F_{10}, F_{20}, F_{42}, $ or $A\Gamma(2^3)$. Notice that we have already considered $F_{10}, F_{20},$ and $F_{42}$ during the 2-nilpotent case.
    
    To calculate the number of orbits of $H \wr S$, we define the following terms. Denote $s=r(H)$. Let $\Pi$ be the set of all partitions of $\Omega$ and $\pi$ be an element of $\Pi$. Let $B(\pi) = n_1 + \cdots + n_j$, where $n_i$ is the number of blocks in $\pi$ with the $i$th largest size, and $F(\pi) = n_1! \cdots n_j!$. Define $N(\pi)$ as the number of orbits of $S$ on $\pi$. By \cite{Permutation}, the number of orbits can be calculated as follows:

    \begin{equation}
        r(H \wr S) = \sum_{\pi \in \Pi} N(\pi) \cdot \frac{P(s, B(\pi))}{F(\pi)}.
    \end{equation}

    To count the number of good orbits, we restrict $N(\pi)$ to only count the orbits of trivially centralized partitions. Then, using the computer program GAP (see Appendix), we find the following:
    
    \begin{table}[H]
        \centering
        \resizebox{5cm}{!}{
        \begin{tabular}{cccc}
            \hline
            $B(\pi)$ & Type & $N(\pi)$ & $F(\pi)$ \\
            \hline
            4 & $(1,1,1,1)$ & 2 & 24 \\
            3 & $(1,1,2)$ & 1 & 2 \\
            \hline
        \end{tabular}
        }
        \caption{Calculations for $A_4$}
        \label{tab:table7}
    \end{table}

    \begin{table}[H]
        \centering
        \resizebox{5cm}{!}{
        \begin{tabular}{cccc}
            \hline
            $B(\pi)$ & Type & $N(\pi)$ & $F(\pi)$ \\
            \hline
            8 & $(1,1,1,1,1,1,1,1)$ & 240 & 40320 \\
            7 & $(1,1,1,1,1,1,2)$ & 120 & 720 \\
            6 & $(1,1,1,1,2,2)$ & 60 & 48 \\
            5 & $(1,1,2,2,2)$ & 30 & 12 \\
            4 & $(2,2,2,2)$ & 14 & 24 \\
            6 & $(1,1,1,1,1,3)$ & 40 & 120 \\
            5 & $(1,1,1,2,3)$ & 20 & 6 \\
            4 & $(1,2,2,3)$ & 10 & 2 \\
            4 & $(1,1,3,3)$ & 6 & 4 \\
            3 & $(2,3,3)$ & 3 & 2 \\
            5 & $(1,1,1,1,4)$ & 10 & 24 \\
            4 & $(1,1,2,4)$ & 5 & 2 \\
            3 & $(2,2,4)$ & 2 & 2 \\
            3 & $(1,3,4)$ & 1 & 1 \\
            4 & $(1,1,1,5)$ & 2 & 6 \\
            3 & $(1,2,5)$ & 1 & 1 \\
            \hline
        \end{tabular}
        }
        \caption{Calculations for $A\Gamma(2^3)$}
        \label{tab:table8}
    \end{table}

    Applying the formula, we find $r(H \wr S) \geq 4$ in all exceptional cases except for $A_4$, which only gives at least 4 good orbits for $s \geq 4$.
\end{proof}

\begin{thm}
    Let $V$ be a faithful irreducible $G$-module where $3 \nmid |V|$ for a finite solvable 3-nilpotent group $G$. There exists $K \triangleleft G$ such that $\fl(K) \leq 5$ and $G$ has at least 1 nonzero orbit of elements $v \in V$ with $\mathbf{C}_G(v) \leq K$. Furthermore, if $G$ has less than 4 orbits of elements $v \in V$ with $\mathbf{C}_G(v) \leq K$, then $G$ has less than 4 orbits on $V$.
\end{thm}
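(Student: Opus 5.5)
The plan is to mirror the proof of Theorem 3.5, with 2 replaced by 3 and the Fitting-length budget raised from 4 to 5, inducting on $\dim(V)$.

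\emph{Quasi-primitive case.} If $V$ is quasi-primitive, Theorem 4.3 says that either $G$ has a regular orbit or $\fl(G)\leq 5$. In the first case take $K=1$; in the second take $K=G$. Either way the first assertion holds. The ``furthermore'' clause is automatic: with $K=1$ every regular orbit counts, and with $K=G$ every orbit counts. (As in Theorem 3.5, I read the first conclusion as ``at least one orbit with centralizer in $K$'', which the zero vector supplies when $K=G$.)

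\emph{Non-quasi-primitive case.} Choose $N\triangleleft G$ with $V_N=V_1\oplus\cdots\oplus V_m$ a decomposition into $m>1$ homogeneous components. Let $S$ be the primitive solvable permutation group on $\Omega=\{V_1,\dots,V_m\}$ and $H=\mathbf N_G(V_1)/\mathbf C_G(V_1)$. Then $G$ embeds in $H\wr S$ with full projection onto $S$. The group $H$ is a quotient of a subgroup of $G$, so it is 3-nilpotent, and $V_1$ is faithful and irreducible with $3\nmid|V_1|$. By induction, either there is $L\triangleleft H$ with $\fl(L)\leq5$ such that $H$ has at least 4 orbits with centralizers in $L$, or $r(H)\leq3$.

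In the first case, Theorem 4.4(3) gives at least 4 good orbits with centralizers in $K=(L\times\cdots\times L)\cap G$. This $K$ is normal in $G$ because $G$ permutes the coordinates, and it embeds in $L^m$, so $\fl(K)\leq 5$.

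If $r(H)\in\{2,3\}$, take $L=H$. Theorem 4.4(1),(2) give 4 good orbits with $K=(H\times\cdots\times H)\cap G$ whenever $r(H)=2$ and $m>9$, or $r(H)=3$ and $m\neq 2,4$. In the remaining cases take $K=G$, which makes the ``furthermore'' clause trivial. It then suffices to prove $\fl(G)\leq5$ via Lemma 2.2, $\fl(G)\leq\fl(H)+\fl(S)$, together with Lemma 4.2, which gives $\fl(H)\leq r(H)$.

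\emph{Residual cases and the main obstacle.} When $r(H)=2$ and $m\leq 9$, we have $\fl(H)\leq2$. The group $S$ is a 3-nilpotent primitive solvable group of degree at most 9, and I expect every such group to have $\fl(S)\leq3$: by Lemma 2.1, $\fl(S)=1+\fl(S_\alpha)$, where $S_\alpha$ is a 3-nilpotent irreducible subgroup of $\mathrm{GL}(k,q)$ with $q^k\leq 9$; this should be checked case by case, the worst being $A\Gamma(2^3)$ with $\fl=3$. That gives $\fl(G)\leq 5$.

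When $r(H)=3$, the leftover cases are $m=2$ and $m=4$. For $m=2$, $\fl(G)\leq 3+1=4$. For $m=4$, $S$ is a 3-nilpotent primitive group of degree 4, so $S=A_4$ and $\fl(S)=2$. With $\fl(H)\leq3$ this gives $\fl(G)\leq5$ directly from Lemma 2.2.

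The step I expect to need care is the bound $\fl(S)\leq3$ for all 3-nilpotent primitive $S$ of degree at most 9. It requires running through degrees $2,3,4,5,7,8,9$ and using that $S_\alpha$ is 3-nilpotent and acts irreducibly on $\mathrm{GF}(q)^k$. Degree 9 has $S_\alpha\leq \mathrm{GL}(2,3)$, which is 3-nilpotent and so has a normal Sylow 3-subgroup; irreducibility then forces that Sylow subgroup to be trivial, making $S_\alpha$ a 2-group with $\fl(S)\leq2$. Degree 8 gives at most $A\Gamma(2^3)$. Even if some case reached $\fl(S)=4$, the budget would still hold there because $\fl(H)\leq 2$ when $r(H)=2$, though I would then need a bound sharper than Lemma 2.2 along the lines of the $m=3$ argument in Theorem 3.5.
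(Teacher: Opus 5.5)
Your outline is the paper's proof of Theorem 4.5 essentially step for step. Theorem 4.3 handles the quasi-primitive case. In the imprimitive case you use Theorem 4.4(3) with $K=(L\times\cdots\times L)\cap G$, then Theorem 4.4(1),(2) with $L=H$. The residual cases go through Lemma 2.2 with $\fl(H)\le r(H)$ and $\fl(S)\le 3$, resp.\ $S=A_4$. However, one step you add is false: the ``furthermore'' clause is \emph{not} automatic when $K=1$.

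With $K=1$, the clause asserts that fewer than four regular orbits forces $r(G)<4$. The mere existence of a regular orbit does not give this. For example, one regular orbit together with three non-regular orbits (the zero orbit among them) is not excluded by anything in Theorem 4.3. This matters because your inductive step uses exactly this clause for $H$ on $V_1$ to obtain the dichotomy ``at least four good orbits for some $L$ with $\fl(L)\le 5$, or $r(H)\le 3$.'' If $H$ had between one and three regular orbits and $r(H)\ge 4$, neither branch applies, and Lemma 4.2 gives no control of $\fl(H)$ in the residual cases. The paper's proof makes the same choice of $K$ and says nothing about the clause, so the fix cannot be borrowed from there.

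Part of the repair is easy. The proof of Theorem 4.3 yields $\fl(G)\le 5$ for all $e\in\{1,2,3,4,8,9,16\}$ regardless of regular orbits, so take $K=G$ in those cases. In the remaining cases ($e=5,6,7$, or $e\ge 10$ with $e\ne 16$), whenever $\fl(G)>5$ you need at least four regular orbits rather than one. That quantitative input is not what Theorem 4.3 records.

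Separately, your degree-$9$ check misuses the definition. $3$-nilpotent means having a normal $3$-complement, not a normal Sylow $3$-subgroup. For instance, $\mathrm{SL}(2,3)\le \mathrm{GL}(2,3)$ is $3$-nilpotent and irreducible on $\GF(3)^2$, with a non-normal Sylow $3$-subgroup. The correct argument uses Lemma 2.1: every nontrivial normal subgroup of the primitive solvable group $S$ contains its unique minimal normal subgroup $M\cong Z_3\times Z_3$. Hence the normal $3$-complement of $S$ is trivial and $S$ is a $3$-group. Then $S_\alpha$ is a $3$-group acting faithfully and irreducibly on $\GF(3)^2$, which is impossible, so degree $9$ does not occur at all. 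More simply, the normal $3$-complement of $S_\alpha\le \mathrm{GL}(2,3)$ is a $2$-group, which already gives $\fl(S)\le 3$.

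The other degrees give only $Z_2$, $Z_3$, $A_4$, transitive subgroups of $F_{20}$ and $F_{42}$, $AGL(1,2^3)$, and $A\Gamma(2^3)$. So your bound $\fl(S)\le 3$, which the paper merely asserts, does hold.
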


\begin{proof}
    Suppose $V$ is a faithful quasi-primitive $G$-module. If $G$ has at least 1 regular orbit on $V$, then we choose $K=1$. Otherwise, let $K=G$. By Theorem 4.3, we have $\fl(K) = \fl(G) \leq 5$.

    Suppose $V$ is not quasi-primitive; then, there exists $N \triangleleft G$ such that $V_N = V_1 \oplus \cdots \oplus V_m$ for $m > 1$ homogeneous components $V_i$ of $V_N$. Let $M$ be the kernel of the permutation action; then $S=G/M$ is the primitive permutation group induced by $G$ on $\{V_1,\ldots,V_m\}$. Consider $V_1$ and let $H=\mathbf N_G(V_1)/\mathbf C_G(V_1)$. Then $H$ acts faithfully and irreducibly on $V_1$, and the standard imprimitive embedding identifies $G$ with a subgroup of $H\wr S$ whose projection onto $S$ is $S$. Choose two distinct $H$-orbits $C_1,D_1$ on $V_1$. For each $i$, choose $g_i\in G$ with $V_1^{g_i}=V_i$ and put $C_i=C_1^{g_i}$ and $D_i=D_1^{g_i}$; these sets are independent of the choice of $g_i$. Choose $x_i\in C_i$ and $y_i\in D_i$. If
    \[
        v=x_1+\cdots+x_j+y_{j+1}+\cdots+y_m,
    \]
    then every element of $\mathbf C_G(v)$ preserves the two orbit types and hence its image in $S$ stabilizes both $\{V_1,\ldots,V_j\}$ and $\{V_{j+1},\ldots,V_m\}$.

    Now, $S$ must be a solvable primitive permutation group on $\Omega = \{V_1, \dots, V_m\}$. By induction on $\dim(V)$, either there exists $L \triangleleft H$ with $\fl(L) \leq 5$ such that $H$ has at least 4 orbits of elements in $V_1$ with centralizers in $L$, or $H$ has less than 4 orbits on $V_1$. If $H$ has at least 4 orbits of elements in $V_1$ with centralizers in $L$, let $K = (L \times \cdots \times L) \cap G$. By Lemma 4.4(3), $G$ has at least 4 orbits on $V$ such that $\mathbf{C}_G(x) \leq K$.

    If $H$ has less than 4 orbits on $V_1$. By Lemma 4.4, $G$ has at least 4 orbits on $V$ such that $\mathbf{C}_G(x) \leq (H \times \cdots \times H) \cap G$ when $r(H)=2$ and $m>9$, or when $r(H)=3$ and $m \ne 2,4$. Then, let $K = (H \times \cdots \times H) \cap G$. This leaves us with a few more cases:
    
    If $r(H) = 2$, $m \leq 9$. Then, $\fl(H) \leq 2$ by Lemma 4.2. Because $S$ must be 3-nilpotent, $\fl(S) \leq 3$, so $\fl(G) \leq \fl (H) + \fl(S) = 5$ by Lemma 2.2. Then, let $K=G$.

    If $r(H) = 3$, $m = 2, 4$. Then, $\fl(H) \leq 3$ by Lemma 4.2. Because $S$ must be 3-nilpotent, $\fl(S) \leq 2$, so $\fl(G) \leq \fl (H) + \fl(S) = 5$ by Lemma 2.2. Then, let $K=G$.
\end{proof}

\begin{thm}
    If $V$ is a finite completely reducible faithful $G$-module, possibly of mixed characteristic where $3 \nmid |V|$, for a solvable 3-nilpotent group $G$, then there exists $v \in V$ and $K \triangleleft G$ such that $\mathbf{C}_G(v) \leq K$ and $\fl(K) \leq 5$.
\end{thm}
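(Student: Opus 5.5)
We will mirror the proof of Theorem 3.6 and induct on $|V|$. The base case is an irreducible $V$. In that case Theorem 4.5 directly supplies a normal subgroup $K \triangleleft G$ with $\fl(K) \leq 5$ and a nonzero $v \in V$ satisfying $\mathbf{C}_G(v) \leq K$. (If $V=0$, then faithfulness forces $G=1$, and $K=G$ works trivially.)

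For the inductive step, suppose $V$ is reducible. Complete reducibility lets us write $V = V_1 \oplus V_2$ with both $V_i$ nonzero completely reducible $G$-modules; they may have different characteristics, but both still satisfy $3 \nmid |V_i|$. Set $C_i = \mathbf{C}_G(V_i)$. Then $V_i$ is a faithful completely reducible $G/C_i$-module. Moreover $G/C_i$ is solvable and 3-nilpotent, since both properties pass to quotients: the image of a normal $3'$-subgroup with 3-power index is again such a subgroup. Because $|V_i| < |V|$, the inductive hypothesis yields $v_i \in V_i$ and $K_i \triangleleft G/C_i$ with $\mathbf{C}_{G/C_i}(v_i) \leq K_i$ and $\fl(K_i) \leq 5$.

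Let $\pi : G \to G/C_1 \times G/C_2$ be the diagonal map and put $K = \pi^{-1}\bigl((K_1 \times K_2) \cap \pi(G)\bigr)$. This $K$ is normal in $G$, being the preimage of a normal subgroup of $\pi(G)$.

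Now take $v = v_1 + v_2$. If $g$ fixes $v$, then $g$ fixes each component $v_i$, since the decomposition is $G$-stable. Hence $gC_i \in \mathbf{C}_{G/C_i}(v_i) \leq K_i$ for $i=1,2$, which gives $g \in K$.

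Finally, faithfulness of $V$ gives $C_1 \cap C_2 = 1$, so $\pi$ is injective. Thus $K$ embeds in $K_1 \times K_2$, and therefore $\fl(K) \leq \max\{\fl(K_1), \fl(K_2)\} \leq 5$. This uses only that Fitting length is monotone under subgroups and that $\fl$ of a direct product is the maximum of the factors' Fitting lengths.

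No step here is a genuine obstacle. All the difficult work lies in Theorem 4.5, which handles the irreducible case and is assumed. The only points requiring care are the bookkeeping ones: checking that the hypotheses (solvable, 3-nilpotent, faithful, completely reducible, $3 \nmid |V_i|$) are inherited by each $G/C_i$ acting on $V_i$, and that the induction runs on $|V|$ so that mixed characteristic causes no problem.
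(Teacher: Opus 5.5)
Your proposal is correct and follows the paper's proof of this theorem essentially verbatim: induction on $|V|$, Theorem 4.5 for the irreducible case, the diagonal map $\pi$ with $K=\pi^{-1}\bigl((K_1\times K_2)\cap\pi(G)\bigr)$ and $v=v_1+v_2$, and the bound $\fl(K)\leq\max\{\fl(K_1),\fl(K_2)\}$. Your additional bookkeeping fills in details the paper leaves implicit, namely that each $G/C_i$ acting on $V_i$ inherits all the hypotheses and why $\mathbf{C}_G(v_1+v_2)\leq K$.
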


\begin{proof}
    We induct on $|V|$. If $V$ is irreducible, the statement is true by Theorem 4.5. Then, write $V = V_1 \oplus V_2$ for completely reducible modules $V_1$ and $V_2$. Denote $C_i = \mathbf{C}_G(V_i)$. Then, $G$ is isomorphic to a subgroup of $G/C_1 \times G/C_2$. By the inductive hypothesis, there exists $v_i$ and $K_i \triangleleft (G/C_i)$ such that $\mathbf{C}_{G/C_i}(v_i) \leq K_i$ and $\fl(K_i) \leq 5$. Let $\pi:G\longrightarrow G/C_1\times G/C_2$ be the diagonal map and set
    \[
        K=\pi^{-1}\bigl((K_1\times K_2)\cap\pi(G)\bigr).
    \]
    Then $K\triangleleft G$. For $v=v_1+v_2$, we have $\mathbf C_G(v)\leq K$. Since $V$ is faithful, $C_1\cap C_2=1$, so $\pi$ is injective and $K$ embeds in $K_1\times K_2$. Therefore
    \[
        \fl(K)\leq\max\{\fl(K_1),\fl(K_2)\}\leq5.
    \]

\end{proof}

\begin{thm}
    If $V$ is a finite completely reducible faithful $G$-module where $3 \nmid |V|$, possibly of mixed characteristic, for a finite solvable 3-nilpotent group $G$, then there exists $v \in V$ such that $\mathbf{C}_G(v) \leq \mathbf{F}_5(G)$.
\end{thm}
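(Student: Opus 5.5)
The plan is to mirror the deduction of Theorem 3.7 from Theorem 3.6, using Theorem 4.6 in place of Theorem 3.6. Let $V$ be a finite completely reducible faithful $G$-module with $3 \nmid |V|$, where $G$ is a finite solvable 3-nilpotent group. First I invoke Theorem 4.6. It supplies a vector $v \in V$ and a normal subgroup $K \triangleleft G$ with $\mathbf{C}_G(v) \leq K$ and $\fl(K) \leq 5$.

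The second step is to show $K \leq \mathbf{F}_5(G)$. Since $\fl(K) \leq 5$, the Fitting series of $K$ reaches $K$ by the fifth term, so $\mathbf{F}_5(K) = K$. Because $K \triangleleft G$, Lemma 2.4 gives $\mathbf{F}_5(K) \leq \mathbf{F}_5(G)$. Combining these,
\[
\mathbf{C}_G(v) \leq K = \mathbf{F}_5(K) \leq \mathbf{F}_5(G),
\]
which is the claim. If $\fl(K) < 5$, we still have $\mathbf{F}_5(K) = K$ because the Fitting series stabilizes at $K$ once it reaches it. The degenerate case $K = 1$ is trivial.

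There is no real obstacle at this final stage; the substantive work was already done in Theorems 4.3 through 4.6. The only point needing care is the normality of $K$ in $G$, which Lemma 2.4 requires. In the mixed-characteristic inductive step of Theorem 4.6, $K$ is constructed as the preimage $\pi^{-1}\bigl((K_1 \times K_2) \cap \pi(G)\bigr)$ of a normal subgroup, so it is indeed normal in $G$.
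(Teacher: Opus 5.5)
Your proof is correct and is essentially the paper's own argument: you invoke Theorem 4.6 to obtain $v$ and a normal subgroup $K$ with $\mathbf{C}_G(v)\leq K$ and $\fl(K)\leq 5$, and then apply Lemma 2.4 to get $K=\mathbf{F}_5(K)\leq \mathbf{F}_5(G)$. Your remark that $\fl(K)\leq 5$ forces $\mathbf{F}_5(K)=K$ spells out a step the paper leaves implicit.
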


\begin{proof}
    By Theorem 4.6, there exists $v \in V$ and $K \triangleleft G$ such that $\mathbf{C}_G(v) \leq K$ and $\fl(K) \leq 5$. By Lemma 2.4, because $K \triangleleft G$ and $\fl(K) \leq 5$, $\mathbf{C}_G(v) \leq K \leq \mathbf{F}_5(G)$.
\end{proof}

\section{Sharpness of the Results}

Here, we show that the bounds in Theorems 3.7 and 4.7 are best possible.

First consider the case $p=2$. Let
\[
H=\Gamma L(1,3^5)=C_{242}\rtimes C_5
\]
act on the natural module $W=\GF(3^5)$, viewed as a $5$-dimensional
vector space over $\GF(3)$. Since $242=2\cdot 11^2$ and the Frobenius
automorphism $x\mapsto x^3$ fixes $-1$, we have
\[
H\cong C_2\times (C_{121}\rtimes C_5).
\]
In particular, $H$ has a normal Hall $2'$-subgroup, and it is
transitive on $W^\#$. Let
\[
G=H\wr S_3=H^3\rtimes S_3
\]
act on $V=W^3$ in the natural imprimitive action. Thus $|V|=3^{15}$,
and $V$ is a faithful irreducible $G$-module: its three coordinate
summands are pairwise nonisomorphic irreducible modules for $H^3$,
and $S_3$ permutes them transitively. The product of the normal Hall
$2'$-subgroup of $H^3$ with $A_3$ is a normal Hall $2'$-subgroup
of $G$. Hence $G$ is $2$-nilpotent. Furthermore,
\[
\mathbf F_1(G)=C_{242}^3,\qquad
\mathbf F_2(G)=H^3,\qquad
\mathbf F_3(G)=H^3\rtimes A_3,
\]
and $\mathbf F_4(G)=G$. For $v=(v_1,v_2,v_3)\in V$, put
\[
\Delta(v)=\{i\mid v_i\ne0\}\subseteq\{1,2,3\}.
\]
Since $H$ is transitive on $W^\#$, every element of the setwise
stabilizer of $\Delta(v)$ in $S_3$ lifts to an element of
$\mathbf C_G(v)$. Every subset of $\{1,2,3\}$ is stabilized by a
transposition. Hence $\mathbf C_G(v)$ contains an element whose
image in $S_3$ lies outside $A_3$, and therefore
\[
\mathbf C_G(v)\nleq \mathbf F_3(G)
\]
for every $v\in V$. Thus the bound $\mathbf F_4(G)$ in
Theorem 3.7 cannot be replaced by $\mathbf F_3(G)$.

Now consider the case $p=3$. Keep
\[
H=\Gamma L(1,2^5)=C_{31}\rtimes C_5
\]
on $W=\GF(2^5)$, and let
\[
S=A\Gamma L(1,2^3)=2^3\rtimes(C_7\rtimes C_3)
\]
act in its natural degree-$8$ action on $\GF(2^3)$. Set
\[
G=H\wr S=H^8\rtimes S
\]
and let $G$ act on $V=W^8$. Since $H$ is a $3'$-group and
\[
AGL(1,2^3)=2^3\rtimes C_7\triangleleft S
\]
is a normal Hall $3'$-subgroup of $S$, the subgroup
\[
H^8\rtimes AGL(1,2^3)
\]
is a normal Hall $3'$-subgroup of $G$. Thus $G$ is 3-nilpotent. Its Fitting series satisfies
\[
\mathbf F_1(G)=C_{31}^8,\qquad
\mathbf F_2(G)=H^8,
\]
\[
\mathbf F_3(G)/H^8\cong 2^3,\qquad
\mathbf F_4(G)/H^8\cong AGL(1,2^3),
\]
and $\mathbf F_5(G)=G$.

For $v=(v_1,\ldots,v_8)\in V$, identify the eight coordinates with $\GF(2^3)$ and put
\[
\Delta(v)=\{x\in \GF(2^3)\mid v_x\ne0\}.
\]
Again, every element of the setwise stabilizer of $\Delta(v)$ in $S$ lifts to an element of $\mathbf C_G(v)$. Let $\sigma:x\mapsto x^2$ be the Frobenius automorphism of $\GF(2^3)$. The group $AGL(1,2^3)$ has one orbit on the $k$-subsets of $\GF(2^3)$ for $k=0,1,2,3$, and two orbits on the $4$-subsets. If $\alpha$ is a generator of $\GF(2^3)^\#$, representatives for these orbits may be chosen among
\[
\varnothing,\quad \{0\},\quad \{0,1\},\quad
\{\alpha,\alpha^2,\alpha^4\},\quad
\{0,\alpha,\alpha^2,\alpha^4\},\quad
\{1,\alpha,\alpha^2,\alpha^4\}.
\]
Each of these subsets is fixed by $\sigma$, and complements give the corresponding statement for subsets of size greater than $4$. Hence every subset of $\GF(2^3)$ is stabilized by an element of
\[
A\Gamma L(1,2^3)\setminus AGL(1,2^3).
\]
It follows that, for every $v\in V$, the group $\mathbf C_G(v)$ contains an element whose image in $S$ does not lie in $AGL(1,2^3)$. Consequently,
\[
\mathbf C_G(v)\nleq \mathbf F_4(G)
\]
for every $v\in V$. Thus the bound $\mathbf F_5(G)$ in Theorem 4.7 cannot be replaced by $\mathbf F_4(G)$.

\section{Applications}
We use these new bounds on group actions to strengthen the theorems in \cite{Addendum}.

\begin{thm}
    Let $A$ be a 2-group that acts faithfully on solvable 2'-group $G$. Let $b$ be an integer such that $|A:C_A(\chi)| \leq b$ for all $\chi \in \Irr(G)$, then $|A:A'| \leq b^5$.
\end{thm}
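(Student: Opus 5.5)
We follow the argument of \cite{Addendum}, which gives $|A:A'|\leq b^8$ from the $\mathbf F_7$ bound and an induction along the Fitting series. We replace the $\mathbf F_7$ input with Theorem 3.7. We argue by induction on $|A|+|G|$. Note that $A$ is a $2$-group, hence trivially $2$-nilpotent, and it acts coprimely on the $2'$-group $G$.

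\emph{Reduction to a module action.} First we pass to a minimal counterexample. Let $\Phi$ be the Frattini subgroup of a suitable $A$-invariant section of $G$. By the standard reduction (Glauberman correspondence and Gallagher-type arguments on $\Irr(G)$), it suffices to treat the case where $A$ acts faithfully on an elementary abelian $A$-invariant section $V$ of $G$ of odd order. Moreover, orbit lengths of $A$ on $\Irr(V)$ are controlled by those on $\Irr(G)$. Since the action is coprime, $V$ and its dual $\widehat V=\Irr(V)$ are faithful completely reducible $A$-modules. The number of $A$-orbit sizes on $\widehat V$ matches the number on $V$ by Brauer's permutation lemma applied to cyclic subgroups. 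Hence we may assume there is a $v\in V$ with $|A:\mathbf C_A(v)|\leq b$ coming from a character $\lambda$ lying under some $\chi\in\Irr(G)$.

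\emph{Key step.} Since $2\nmid|V|$, Theorem 3.7 gives $v\in V$ with $\mathbf C_A(v)\leq \mathbf F_4(A)$. The point is that $A$ is itself nilpotent, so we cannot use Theorem 3.7 directly on $A$. Instead we apply it layer by layer, as in \cite{Addendum}. We decompose $G$ along an $A$-invariant series, and at each stage we find a character whose stabilizer in the acting group lies in the fourth Fitting term. For $A$ a $2$-group, this translates into stabilizers lying in the fourth term of a derived-type filtration. We then combine this with the tensor-product trick: $\chi=\chi_1\times\chi_2$ on direct factors, with stabilizer equal to the intersection of the two stabilizers. The resulting inequality is
\[
|A:A'|\leq |A:\mathbf C_A(\chi)|\cdot |\mathbf C_A(\chi)A':A'| \leq b\cdot b^{4},
\]
obtained by bounding each of the four abelian layers $\mathbf F_{i}/\mathbf F_{i-1}$ of the relevant section by one factor of $b$. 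The abelian layers are bounded via Gluck's result that an abelian group acting coprimely has a regular orbit on characters of each factor, up to a factor of $b$.

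\emph{Expected main obstacle.} The hardest step is the bookkeeping that turns the bound $\mathbf C(v)\leq\mathbf F_4$ from Theorem 3.7 into exactly four extra factors of $b$, giving exponent $1+4=5$. This parallels \cite{Addendum}, where $\mathbf F_7$ gives $1+7=8$. Concretely, the $2$-nilpotent hypothesis must be satisfied by the linear group arising at each induction step. That group is a section of $A\ltimes$ (a $2'$-group), and such a section is $2$-nilpotent because its normal Hall $2'$-subgroup is the image of the $2'$-part. One must also check that the $2\nmid|V|$ hypothesis is preserved, which holds because all sections of $G$ have odd order. If the layer count does not line up directly, the first fallback is to follow the proof of the corresponding theorem of \cite{Addendum} verbatim, substituting Theorem 3.7 wherever their stabilizer bound appears, and to verify that each of their counting lemmas uses only the Fitting height of the stabilizer bound.
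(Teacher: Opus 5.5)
Your overall plan (rerun the argument of \cite{Addendum} with Theorem 3.7 in place of the $\mathbf F_7$ bound, using that the relevant linear group is a section of $A\ltimes G$ and hence $2$-nilpotent) is the paper's plan. Your closing ``fallback'' is essentially the paper's whole proof. But the argument you actually write down has a gap at the key step.

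After reducing to $A$ acting on an elementary abelian section $V$, you apply Theorem 3.7 to $A$. Since $A$ is nilpotent, $\mathbf F_4(A)=A$, so this says nothing, as you notice. What you offer instead is not an argument: a ``derived-type filtration'', four ``abelian layers'' $\mathbf F_i/\mathbf F_{i-1}$ each costing one factor of $b$ via a Gluck-type regular orbit statement, and a tensor-product trick. Fitting factors are nilpotent, not abelian, and no step of your sketch actually derives $|\mathbf C_A(\chi)A':A'|\le b^4$. The reduction paragraph also points the wrong way. Every $A$-orbit on $\Irr(G)$ already has length at most $b$ by hypothesis, so producing some $v$ with $|A:\mathbf C_A(v)|\le b$ is vacuous. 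What is needed is a character with a \emph{small} stabilizer.

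The missing idea is the choice of acting group. In the paper, Theorem 3.7 is applied not to $A$ but to $\Gamma/\mathbf F(\Gamma)$, where $\Gamma=GA$ is $2$-nilpotent with normal Hall $2'$-subgroup $G$. Since $O_2(\Gamma)$ centralizes $G$ and lies in the Sylow $2$-subgroup $A$, faithfulness gives $O_2(\Gamma)=1$. Hence $\mathbf F(\Gamma)$ has odd order, and $\Gamma/\mathbf F(\Gamma)$ acts faithfully and completely reducibly on $\mathbf F(\Gamma)/\Phi(\Gamma)$ and on its character group. Theorem 3.7 then gives $\lambda$ with $\mathbf C_\Gamma(\lambda)\le\mathbf F_5(\Gamma)$, because $\mathbf F_4(\Gamma/\mathbf F(\Gamma))=\mathbf F_5(\Gamma)/\mathbf F(\Gamma)$. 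This replaces $\mathbf C_\Gamma(\lambda)\le\mathbf F_8(\Gamma)$ in \cite{Addendum}, whose remaining argument is used unchanged and yields $b^5$ in place of $b^8$. So your numerology $5=1+4$ comes from this Fitting shift, not from a separate layer-by-layer estimate.

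Keeping $G$ in the acting group is essential. If $\chi\in\Irr(G)$ lies over $\lambda$ and $a\in\mathbf C_A(\chi)$, then $\lambda^a$ is only $G$-conjugate to $\lambda$. So $A$-orbits on $\Irr(G)$ are controlled by $\Gamma$-stabilizers of $\lambda$, which is exactly the information your reduction to $A$ alone throws away.
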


\begin{proof}
    We follow the proof in Theorem 2.2 of \cite{Addendum}, which proves that when $A$ is a $p$-group that acts faithfully on solvable $p'$-group $G$, $|A:A'| \leq b^8$. Because $p=2$, $\Gamma = AG$ is 2-nilpotent. Thus, we can replace $\mathbf{F}_7(\Gamma/\mathbf{F}(\Gamma))=\mathbf{F}_8(\Gamma)/\mathbf{F}(\Gamma)$ with $\mathbf{F}_4(\Gamma/\mathbf{F}(\Gamma)) = \mathbf{F}_5(\Gamma)/\mathbf{F}(\Gamma)$ by our Theorem 3.7. It follows that $\mathbf{C}_\Gamma(\lambda) < \mathbf{F}_5(\Gamma)$ instead of $\mathbf{F}_8(\Gamma)$, which reduces the bound to $|A:A'| \leq b^5$.
\end{proof}

\begin{thm}
    Let $G$ be a finite solvable group such that $O_p(G) = 1$ for $p=2$ and let $|G|_p = p^n$. Then $G$ contains a $p$-block $B$ such that $d(B) \leq \lfloor \frac{9n}{10} \rfloor$.
\end{thm}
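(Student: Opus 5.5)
The plan is to follow the argument of \cite{Addendum}, which is itself modeled on the classical Gluck--Wolf/Espuelas approach. There one chooses a $p$-block whose defect group is a Sylow $p$-subgroup of the stabilizer of a suitable character. The only change is to feed in the sharper exponent coming from Theorem 6.1, i.e.\ $|A:A'|\le b^5$ in place of $b^8$.

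Set $F=\mathbf F(G)$. Since $O_2(G)=1$, $F$ is a $2'$-group. Let $P\in\mathrm{Syl}_2(G)$, so $|P|=2^n$. For $\theta\in\Irr(F)$, the block of $G$ covering $\theta$ and chosen via Fong reduction has defect group contained in a Sylow $2$-subgroup of $I_G(\theta)$. So it suffices to find $\theta\in\Irr(F)$ with
\[
|I_G(\theta)|_2\le 2^{\lfloor 9n/10\rfloor}.
\]
Equivalently, we want $|P:P\cap I_G(\theta)|\ge 2^{n/10}$ for a suitably conjugated $P$.

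The inductive step follows \cite{Addendum}. Consider the group $\Gamma=PF$ acting on $\Irr(F)$. This group is $2$-nilpotent, because $F$ is a normal $2'$-subgroup and $P$ is a $2$-group. It acts faithfully since $\mathbf C_G(F)\le F$. Let $b$ be the largest $P$-orbit length on $\Irr(F)$.
\begin{enumerate}
\item Apply Theorem 6.1 with $A=P/\mathbf C_P(F)$. Since $\mathbf C_P(F)=1$, this gives $|P:P'|\le b^5$, and hence $2^{n}/|P'|\le b^5$.
\item Handle $P'$ by passing to the quotient. Let $N$ be the normal subgroup built from the $2$-nilpotent structure, namely $O_{2'}(G)$ together with the part of $P$ already controlled. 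Then apply induction to $G/O_{2',2}(G)$, whose $O_2$ is again trivial after removing $O_{2'}$, just as in \cite{Addendum}. The abelianization estimate controls the top layer, and the inductive hypothesis controls the remainder.
\item Combine the bounds. A character with orbit length $b$ moves at least a $b$-factor of $P$. Balancing the exponent $5$ against the Sylow order in the usual way yields a character whose stabilizer has $2$-part at most $2^{n - n/10}$. With the old exponent $8$, the same bookkeeping gave $15n/16$.
\end{enumerate}

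I expect the main obstacle to be the numerology in step (3). That is where one must check that the recursion in \cite{Addendum} turns exponent $k$ into the bound $\lfloor (2k)/(2k+?)\,n\rfloor$; concretely, $k=5$ must give exactly $9/10$, consistent with $8\mapsto 15/16$. I would redo their inequality $|P|\le |P:P'|\cdot|P'|$ with $|P:P'|\le b^5$. I would also redo the induction that bounds $|P'|$ through the action on $\Irr$ of the next Fitting layer. Then I would solve for the optimal threshold, verifying that the worst case gives the ratio $1-\tfrac{1}{10}$. The floor comes for free because $d(B)$ is an integer.
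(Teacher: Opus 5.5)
Your framing matches the paper: a Fong reduction to a character of a normal $2'$-subgroup, with the only new input being the exponent $5$ of Theorem 6.1 in place of $8$. However, the step that actually produces $9n/10$ is missing, and what you put in its place does not work.

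\textbf{Step (1).} You apply Theorem 6.1 to the full Sylow subgroup $P$ acting on $\Irr(\mathbf F(G))$. This only gives $b\ge |P:P'|^{1/5}$, and $|P:P'|$ can be negligible compared with $2^n$ even when $O_2(G)=1$. For example, let $P=C_2\wr\cdots\wr C_2$ ($k$ factors) permute the coordinates of $N=\GF(3)^{2^k}$, and put $G=N\rtimes P$. Then $O_2(G)=1$ and $n=2^k-1$, but $|P:P'|=2^k$. So step (1) yields only $b\ge 2^{k/5}$, not the needed $2^{(2^k-1)/10}$. Your inequality $2^n/|P'|\le b^5$ is useless without something like $|P'|\le 2^{n/2}$, which is false in general.

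\textbf{Step (2).} This does not repair the problem. The subgroup $P'$ is unrelated to the layer $O_{2',2}(G)$. Induction on $G/O_{2',2}(G)$ concerns blocks of a quotient by a subgroup that is not a $2'$-group, so it gives no direct control of defect groups in $G$. You also give no Clifford-theoretic mechanism tying whatever the induction produces to the character $\theta$ from step (1).

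\textbf{Step (3).} This is an unverified balancing.

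\textbf{A smaller slip.} The largest $P$-orbit length does not bound $|G:I_G(\theta)|_2$ from below. That quantity is the minimum of $|P:C_P(\theta^g)|$ over $g\in G$, so you must control the $P$-orbits of all $G$-conjugates of $\theta$, not one maximal orbit.

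\textbf{The paper's argument.} Following Theorem 2.3 of the Addendum, it is not recursive. It splits $n=n_1+n_2$ and takes $\mu\in\Irr(N_0)$, for a normal $2'$-subgroup $N_0$, with $|P/P':C_{P/P'}(\mu)|\ge 2^{n_1/5}$ (formerly $2^{n_1/8}$). The decisive extra input is the inequality $n_2<n_1$, i.e.\ $n_1>n/2$. Hence characters $\varphi$ lying over $\mu$ have $\varphi(1)_2\ge 2^{n_1/5}>2^{n/10}$, which gives a block with $d(B)\le\lfloor 9n/10\rfloor$. The upper part $n_2$ is simply discarded.

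This also settles your numerology question. Exponent $k$ gives the ratio $1-\frac{1}{2k}$, and the factor $2$ is exactly $n_1>n/2$: $8\mapsto 15/16$ and $5\mapsto 9/10$. It is not a ratio of the form $2k/(2k+\cdot)$.

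What your proposal lacks is precisely this inequality $n_1>n/2$. You also need to apply Theorem 6.1 only to the part of the Sylow subgroup measured by $n_1$, rather than to all of $P$.
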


\begin{proof}
    We follow the proof in Theorem 2.3 of \cite{Addendum}, which proves that there exists $\mu \in \Irr(N_0)$ such that $|P/P' : C_{P/P'}(\mu)| \geq p^{n_1/8}$. Because $p=2$, we can replace $p^{n_1/8}$ with $p^{n_1/5}$ by our Theorem 5.1. Substituting $n_2 = n-n_1$ into $n_2 < n_1$, we find $n_1 > \frac{n}{2}$. Therefore, $\varphi(1)_p \geq p^{n_1/5} > p^{n/10}$, so $d(B) \leq \lfloor \frac{9n}{10} \rfloor$.
\end{proof}

\begin{thm}
    Let $A$ be a 3-group that acts faithfully on solvable 3'-group $G$. Let $b$ be an integer such that $|A:C_A(\chi)| \leq b$ for all $\chi \in \Irr(G)$, then $|A:A'| \leq b^6$.
\end{thm}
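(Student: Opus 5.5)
The plan is to mirror the proof of Theorem 5.1 exactly, following the argument of Theorem 2.2 of \cite{Addendum}, and to change only the step where a large-orbit/stabilizer bound is invoked. Set $\Gamma = AG$, the semidirect product. Since $A$ is a $3$-group and $G$ is a solvable $3'$-group, $G$ is a normal Hall $3'$-subgroup of $\Gamma$ with $\Gamma/G\cong A$ a $3$-group. Hence $\Gamma$ is solvable and $3$-nilpotent, and so is every quotient of $\Gamma$, in particular $\Gamma/\mathbf F(\Gamma)$.

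In \cite{Addendum}, the argument passes to a faithful completely reducible module for $\Gamma/\mathbf F(\Gamma)$. This module comes from the Frattini quotients of the Sylow subgroups of $\mathbf F(G)$ (equivalently, linear characters $\lambda$ of $\mathbf F(\Gamma)$-sections). It has coprime characteristic to $3$, because $\mathbf F(\Gamma)\le G$ is a $3'$-group: the Fitting subgroup of $\Gamma$ intersects $A$ trivially, as $A$ acts faithfully on $G$ and $\mathbf F(\Gamma)$ centralizes... more carefully, $O_3(\Gamma)$ centralizes $G$ and so is trivial. There the general bound $\mathbf C(\lambda)\le\mathbf F_7(\Gamma/\mathbf F(\Gamma))$ was used. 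Here I would instead apply Theorem 4.7 to this module, whose order is prime to $3$. That gives some $\lambda$ with
\[
\mathbf C_{\Gamma/\mathbf F(\Gamma)}(\lambda)\le \mathbf F_5(\Gamma/\mathbf F(\Gamma))=\mathbf F_6(\Gamma)/\mathbf F(\Gamma),
\]
so that $\mathbf C_\Gamma(\lambda)\le\mathbf F_6(\Gamma)$ replaces $\mathbf F_8(\Gamma)$.

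The remainder of the argument in \cite{Addendum} then bounds $|A:A'|$ by one factor of $b$ per Fitting layer. Concretely, $A\cap\mathbf F_i(\Gamma)$ and the successive quotients are handled by using, for each layer, a character whose $A$-orbit has length at most $b$. Let $A_0$ be the part of $A$ inside the top layer, modulo the stabilizer. In \cite{Addendum} the exponent $8$ arises as (number of layers up to $\mathbf F_8$) with the bottom layer $\mathbf F(\Gamma)\le G$ contributing nothing from $A$. With $\mathbf F_6$ in place of $\mathbf F_8$, the same count yields the exponent $6$, just as $\mathbf F_5$ gave $b^5$ in Theorem 5.1. So $|A:A'|\le b^6$.

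The main obstacle is checking that the bookkeeping in \cite{Addendum} really depends on the stabilizer bound only through the index $i$ of $\mathbf F_i(\Gamma)$, giving exponent $i$. I also need the module to which Theorem 4.7 is applied to be faithful, completely reducible, and of order prime to $3$. For faithfulness, I would verify that the kernel of $\Gamma$ on the relevant module is exactly $\mathbf F(\Gamma)$, using Gaschütz's theorem on $\mathbf F(\Gamma)/\Phi(\Gamma)$. Complete reducibility follows from Maschke in coprime characteristic within each Sylow part, applied to the $\Gamma/\mathbf F(\Gamma)$-action on $\mathbf F(\Gamma)/\Phi(\Gamma)$. The order is prime to $3$ since it is a section of $G$.
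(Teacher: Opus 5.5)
Your proposal is the paper's argument. You set $\Gamma=AG$, note that it is solvable and $3$-nilpotent, and rerun Theorem 2.2 of \cite{Addendum} with Theorem 4.7 applied to $\Gamma/\mathbf F(\Gamma)$, so that $\mathbf C_\Gamma(\lambda)\le\mathbf F_6(\Gamma)$ replaces $\mathbf F_8(\Gamma)$. Like the paper, you then take from \cite{Addendum} that this yields $b^6$. Your extra check that $O_3(\Gamma)=1$, so that the module has order prime to $3$, is correct and is left implicit in the paper.

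One small fix: complete reducibility of $\mathbf F(\Gamma)/\Phi(\Gamma)$ should be credited to Gasch\"utz's theorem, not Maschke. The order $|\Gamma/\mathbf F(\Gamma)|$ need not be coprime to the primes dividing $|\mathbf F(\Gamma)|$, so Maschke does not apply directly.
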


\begin{proof}
    We follow the proof in Theorem 2.2 of \cite{Addendum}, which proves that when $A$ is a $p$-group that acts faithfully on solvable $p'$-group $G$, $|A:A'| \leq b^8$. Because $p=3$, $\Gamma = AG$ is 3-nilpotent. Thus, we can replace $\mathbf{F}_7(\Gamma/\mathbf{F}(\Gamma))=\mathbf{F}_8(\Gamma)/\mathbf{F}(\Gamma)$ with $\mathbf{F}_5(\Gamma/\mathbf{F}(\Gamma)) = \mathbf{F}_6(\Gamma)/\mathbf{F}(\Gamma)$ by our Theorem 4.7. It follows that $\mathbf{C}_\Gamma(\lambda) < \mathbf{F}_6(\Gamma)$ instead of $\mathbf{F}_8(\Gamma)$, which reduces the bound to $|A:A'| \leq b^6$.
\end{proof}

\begin{thm}
    Let $G$ be a finite solvable group such that $O_p(G) = 1$ for $p=3$ and let $|G|_p = p^n$. Then $G$ contains a $p$-block $B$ such that $d(B) \leq \lfloor \frac{19n}{21} \rfloor$.
\end{thm}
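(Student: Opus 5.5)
We follow the pattern of the proof of Theorem 5.2, which in turn follows the proof of Theorem 2.3 of \cite{Addendum}. The only change is that Theorem 5.3 is used in place of the $p$-group estimate $|A:A'|\le b^8$.

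First we recall the setup of \cite{Addendum}. Since $O_p(G)=1$ with $p=3$, we pass to the normal subgroup $N_0$ and the Sylow $p$-subgroup $P$ of the relevant section. Write $|G|_p=p^n$ and split $n=n_1+n_2$ as in \cite{Addendum}. There $n_1$ is the contribution of $P/P'$, which acts faithfully and coprimely on the $3'$-group $N_0$, and $n_2$ is the remainder, which satisfies $n_2<n_1$. We then apply Theorem 5.3 to the $3$-group $P/P'$ acting on $N_0$. Take $b$ to be the largest value of $|P/P':\mathbf C_{P/P'}(\mu)|$ over $\mu\in\Irr(N_0)$. Theorem 5.3 gives $p^{n_1}\le|P/P'|\le b^6$, so there is some $\mu\in\Irr(N_0)$ with
\[
|P/P' : \mathbf C_{P/P'}(\mu)| \geq p^{n_1/6}.
\]
This replaces the exponent $n_1/8$ of \cite{Addendum}. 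In the next step of \cite{Addendum}, the orbit length of $\mu$ becomes a $p$-part of a character degree: an irreducible constituent $\varphi$ of the induced character lying over $\mu$ has $\varphi(1)_p\ge p^{n_1/6}$. The defect of the block $B$ containing $\varphi$ then satisfies $d(B)\le n-\log_p\varphi(1)_p$.

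The arithmetic step is where the value $19n/21$ must be recovered. Using $n_1>n/2$ alone gives only $\varphi(1)_p>p^{n/12}$, which is $d(B)\le\lfloor 11n/12\rfloor$. That is weaker than $19n/21$, so the crude substitution used in the proof of Theorem 5.2 does not suffice here. The target $2/21$ instead suggests a sharper use of the splitting. In \cite{Addendum} the remaining part $n_2$ is itself controlled: there is a second factor, namely the $p$-part coming from the action of the top portion, and it contributes a degree of at least $p^{n_2/\beta}$ for some constant $\beta$. Combining the two contributions then yields the bound $\max\bigl(\cdot\bigr)$ after optimizing the split of $n$. Concretely, I would track in \cite{Addendum} the inequality that produced their constant $1/14$. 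Their old estimate $13n/14$ corresponds to $n_1/8$ balanced against the second constraint. Substituting $6$ for $8$ in that balancing, and solving for the worst case of $n_1$, should give $\varphi(1)_p\ge p^{2n/21}$. Hence $d(B)\le n-2n/21=19n/21$, and integrality gives the floor.

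The main obstacle is this bookkeeping. I must identify exactly which linear constraint between $n_1$ and $n_2$ is used in \cite{Addendum} to reach $13/14$ from exponent $8$, and check that the same constraint with exponent $6$ gives exactly $2/21$ rather than merely $1/12$. If that constraint is $n_2\le n_1/\gamma$ for some constant $\gamma$, then the bound is $n_1/6\ge n\gamma/(6(\gamma+1))$. Setting this equal to $2n/21$ gives $\gamma=4/3$. So I would verify from the structure of $N_0$ and $G/N_0$, through Theorem 4.7 for $p=3$, that $n_2\le 3n_1/4$ holds in the $p=3$ case, and then conclude.
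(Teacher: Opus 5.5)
Your skeleton is the paper's: take the $p=3$ argument of \cite{Addendum}, use Theorem 5.3 to replace the orbit estimate $p^{n_1/8}$ by $p^{n_1/6}$, and finish with arithmetic. The gap is in the one non-formal input to that arithmetic, namely the linear constraint between $n_1$ and $n_2$.

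\textbf{Where the constraint comes from.} The relevant argument is Theorem 2.4 of \cite{Addendum} (the $p=3$ case), not Theorem 2.3. Its setup does not stop at $n_2<n_1$, which is the $p=2$ constraint used for Theorem 5.2. It supplies
\[
n_2<\frac{3n_1-2}{4},
\]
and the paper imports this inequality unchanged. Substituting $n_2=n-n_1$ gives $n_1>\frac{4n+2}{7}>\frac{4n}{7}$, hence $\varphi(1)_p\ge p^{n_1/6}>p^{2n/21}$ and $d(B)\le\lfloor 19n/21\rfloor$.

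\textbf{What your proposal is missing.} You correctly worked out that a constraint of the shape $n_2\le\frac34 n_1$ is exactly what is needed. Your consistency check against the old constant $13/14$ with exponent $8$ also points to the same $\gamma=4/3$. But you neither state nor justify this inequality. Recovering it from the previously published constant is a heuristic, not a proof.

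\textbf{Why your proposed route fails.} You plan to verify $n_2\le\frac34 n_1$ ``through Theorem 4.7 for $p=3$.'' That cannot work. Theorem 4.7, and Theorem 5.3 built on it, concern vector stabilizers for coprime actions of $3$-nilpotent groups. They improve only the orbit-length exponent ($8\to 6$) and say nothing about how large $n_2$ can be relative to $n_1$. That comparison bounds the $3$-part of one section of $G$ in terms of another. It is an independent ingredient of the original $p=3$ argument and is untouched by this paper.

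As written, with only $n_2<n_1$ available, your argument yields just $d(B)\le\lfloor 11n/12\rfloor$. Citing $n_2<\frac{3n_1-2}{4}$ from the proof of Theorem 2.4 of \cite{Addendum} closes the gap, and your arithmetic then goes through verbatim. Even the non-strict $n_2\le\frac34 n_1$ would suffice, since $d(B)$ is an integer.
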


\begin{proof}
    We follow the proof in Theorem 2.4 of \cite{Addendum}, which proves that there exists $\mu \in \Irr(N_0)$ such that $|P/P' : C_{P/P'}(\mu)| \geq p^{n_1/8}$. Because $p=3$, we can replace $p^{n_1/8}$ with $p^{n_1/6}$ by our Theorem 5.3. Substituting $n_2 = n-n_1$ into $n_2 < \frac{3n_1-2}{4}$, we find $n_1 > \frac{4n + 2}{7} > \frac{4n}{7}$. Therefore, $\varphi(1)_p \geq p^{n_1/6} > p^{2n/21}$, so $d(B) \leq \lfloor \frac{19n}{21} \rfloor$.
\end{proof}

\section*{Appendix}
All GAP code used in this paper is contained here: https://github.com/chloekso/math/tree/main/orbit

\section*{Acknowledgements}
This work was partially supported by a grant from the Simons Foundation (\#918096, to YY).

\section*{Disclosure Statement}
The authors declare that they have no competing interests and no conflicts of interest.

\section*{Data Availability Statement}
Data sharing is not applicable to this article, as no data sets were generated or analysed during the current study.

\end{document}